\documentclass[12pt]{amsart}

\usepackage[ansinew]{inputenc}
\usepackage{pst-all,epsfig}
\usepackage{float}
\usepackage{graphicx, graphics}
\usepackage[USenglish]{babel}
\usepackage{amsmath}
\usepackage{amsfonts}
\usepackage{amssymb}
 
\usepackage[left=2.5cm,top=2cm,right=2.5cm,bottom=2.5cm]{geometry}
\setlength{\parindent}{0in} \setlength{\parskip}{12pt}

\addtolength{\parskip}{4pt}

\newcommand{\R}{\mathbb{R}}

\newcommand{\Rn}{\mathbb{R}^{n}}
\newcommand{\Sn}{\mathbb{S}^{n-1}}
\newcommand{\Sd}{\mathbb{S}^{2}}

\newcommand{\inte}{\operatorname*{int}}

\newcommand{\aff}{\operatorname*{aff}}

\newcommand{\bd}{\operatorname*{bd}}

\newtheorem{problem}{Problem}

\newtheorem{lemma}{Lemma}
\newtheorem{theorem}{Theorem}

\newtheorem{conjecture}{Conjecture}

\title{Characterizations of ellipsoids by means of the strong intersection property}

\author{E. Morales-Amaya}
\thanks{ }
\address{School of Mathematics, Autonomous University of Guerrero, Carlos E. Adame 54, Col. Garita C.P. 39650, Acapulco, Guerrero. Mexico.}


\begin{document}

\maketitle

\begin{abstract}  
Let $E_1,E_2\subset \Rn$  be two homothetic solid ellipsoids, $n\geq 3$,  
with center at the origin $O$ of a system coordinates of $\Rn$, and $E_1\subset \inte E_2$. Then there exists a $O$-symmetric ellipsoid $E_3$ such that $E_3$ is homothetic to $E_1$ and,
for all $x\in \bd E_2$, there exists an hyperplano $\Pi(x)$, $O\in \Pi(x)$, such that the relation
\begin{eqnarray}\label{espada}
S(E_1,x)\cap S(E_1,-x)= \Pi(x) \cap E_3.
\end{eqnarray} 
holds, where $S(E_1,x)$ and $S(E_1,-x)$ are the supporting cones of $E_1$ with apex $x$ and $-x$, respectively.

In this work we prove that aforesaid condition characterizes the ellipsoid. In fact, we prove that if $K,S, G\subset \Rn$ are three convex bodies, $n\geq 3$, $O\in \inte K$, $K\subset \inte G\subset \inte S$ and $G$ strictly convex and, for all $x\in \bd S$, there exists $y\in \bd S$, $O$ in the line defined by $x,y$, an hyperplane $\Pi(x)$, $O\in \Pi(x)$, such that the relation
\begin{eqnarray}\label{fuerza}
S(K,x)\cap S(K,y)= \Pi(x) \cap \bd G.
\end{eqnarray} 
holds, where $S(K,x)$ and $S(K,y)$ are the supporting cones of $K$ with apex $x$ and $y$, respectively, then $G,K$ and $S$ are $O$-symmetric homothetic ellipsoids.

In this case, we say that the convex body $K$ has the \textit{strong intersection property} relative to $O$ and $S$ and with \textit{associated} body $G$. Thus our main result affirm that if the convex body $K$ has the  strong intersection property relative to $O$ and $S$ and with associated strictly convex body $G$, then $K,S$ and $G$ are concentric homothetic ellipsoids. 
\end{abstract}
\section{Introduction.}  
Let $\mathbb{R}^{n}$ be the Euclidean space of dimension $n$ endowed with the usual inner product $\langle \cdot, \cdot\rangle : \mathbb{R}^{n} \times \mathbb{R}^{n} \rightarrow \R$. We take an orthogonal system of coordinates $(x_1,...,x_{n})$ for  $\mathbb{R}^{n}$ and we denote by $O$ its origin. Let $B(n)=\{x\in \mathbb{R}^{n}: ||x||\leq 1\}$ be the $n$-ball of radius $1$ centered on the origin, and let $\mathbb{S}^{n-1}=\{x\in \mathbb{R}^{n}: ||x|| = 1\}$ be its boundary. For $u\in \Sn$ we denote by $u^{\perp}$ the hyperplane orthogonal to $u$. A set $K\subset \Rn$ is said to be a \textit{convex body} if it is compact convex  set with non-empty interior. An excellent reference for the basic concepts and results of convexity is the book \cite{gardner}.  The line and the line segment defined by the point $x,y\in \Rn$ will be denoted by $L(x,y)$ and $[x,y]$, respectively.

A chord $[p,q]$ of a convex body $K$ is called a \textit{diametral chord} of $K$, if there are parallel support hyperplanes of $K$ at $p$ and $q$.

Let $H$ be an hyperplane and let $x,y\in \Rn$, $x\not=y$. Let $R^H_{xy}: \Rn \rightarrow \Rn$ be the affine reflection with respect to $H$ and parallel to the line $L(x,y)$.

Let $K\subset \Rn$ be a convex body. Given a point $x \in \Rn \backslash K$ we denote the cone generated by $K$ with apex $x$ by $C(K, x)$, that is, $C(K, x) := \{x + \lambda(y - x) : y \in K, \lambda \geq  0\}$, by $S(K, x)$ the boundary of $C(K, x)$, in other words, $S(K, x)$ is the support cone of $K$ from the point $x$ and by 
$\Sigma(K, x)$ the graze of $K$ from $x$, that is, $\Sigma(K, x) :=S(K, x)\cap  \bd K$.  

Let $K, S\subset \Rn$ be a convex bodies, $n\geq 3$, $K\subset \inte S$. Suppose that, for every $x\in \bd S$,  the set $\Sigma(K, x)$ is contained in a hyperplane. It has been conjectured that such condition implies that the convex body is an ellipsoid. In \cite{gjmc2} was proved such conjecture with additional conditions: $K$ and $S$ are $O$-symmetric and $\bd S$ \textit{is far enough to} $\bd K$. In that work was observed that, for every $x\in \bd S$, the set $S(K,x)\cap S(K,-x)$ is contained in an hyperplane (See Lemma 2 of \cite{gjmc2}).  This observation motives the following 
definition: We say that the convex body $K\subset \Rn$, $n\geq 3$ has the \textit{intersection property} in dimension $n$ if there exists a point 
$O\in \inte K$ and a convex body $S\subset \Rn$, $K\subset S$, and, for every $x\in \bd S$, there exists $y\in \bd S$, $O\in L(x,y)$ an hyperplane $\Pi(x)$, $O\in \Pi(x)$, with the property that the relation
\[
S(K,x)\cap S(K,y)\subset \Pi(x),
\] 
holds. 

We say that the convex body $K\subset \Rn$, $n\geq 3$ has the \textit{strong intersection property} in dimension $n$ if it has the intersection property for $O\in \inte K$ and the convex body $S\subset \Rn$, $K\subset S$, and, furthermore, there exists a convex body $G$ such that $K\subset G$ and, for every 
$x, y\in \bd S$, $O\in L(x,y)$, the relation
\begin{eqnarray}\label{emma}
S(K,x)\cap S(K,y)= \Pi(x) \cap G,
\end{eqnarray}
holds. In this case, we say that the convex body $K$ has the \textit{strong intersection property} relative to $O$ and $S$ and with \textit{associated} body $G$.

In this work we will star stating two results which represent a property of the ellipsoid in terms of the intersections of pairs of support cones of a convex body (Theorems \ref{tokio} and \ref{emperador}). By completeness, we will give the proofs of this two results. Furthermore, our main result is Theorem \ref{valor} which affirm that if the convex body $K\subset \Rn$, $n\geq 3$, has the strong intersection property relative to $O\in \inte K$ and the convex body $S$, $K\subset \inte S$, and with associated strictly convex body $G$, $K\subset \inte G\subset \inte S$, then $K$, $S$ and $G$ are concentric homothetic ellipsoids. 

In order to prove theorem \ref{valor}, on the one hand, we first show, in Theorem \ref{great}, that if the convex body $K\subset \Rn$, $n\geq 3$, has the strong intersection property relative to $O\in \inte K$ and the convex body $S$, $K\subset \inte S$, and with associated strictly convex body $G$, $K\subset \inte G \subset \inte S$ is because $K,S$ and $G$ are $O$-symmetric (notice that we require assume that $G$ is strictly convex), this is carried out by a series of lemmas and, finally, it is shown that K is centrally symmetric (for which a characterization of central symmetry demonstrated in \cite{efrenconos} is used) and, on the other hand, we use the Theorem \ref{chiquita}, where additionally to the strong intersection property, is  assumed that if some of the convex bodies $K,S$ and $G$ is an ellipsoid, then the other two are ellipsoids too.  
 \section{Statement of the results.} 
We will start presenting two results relatives to ellipsoids, the Theorems \ref{tokio} and \ref{emperador}. In order to do this we need the following definitions.

Let $S\subset \Rn$ be an embedding of $\Sn$ in $\Rn$. By the Jordan's Curve Theorem in $n$ dimension (reference), $S$ divides $\Rn$ in two components, we will call the bounded component as the \textit{interior} of $S$ and it will be denoted by $\inte S$. 

Given $x\in \Rn$, we denote by 
$\overrightarrow{Ox}$ the ray defined by $x$, i.e., 
$\overrightarrow{Ox}=\{\lambda x :  \lambda \geq  0\}$. The set $S$ is said to be a $O$-\textit{star} if in every ray, starting in $O$, there exists a point of $S$ and such point is unique. Let $S\subset \Rn$ be a $O$-star set. We consider a map $\phi: S \rightarrow S$  such that, for $x\in S$, $\phi(x)$ is defined as the point in $S$ such that $\overrightarrow {O\phi(x)}$ has the opposite direction of the ray $\overrightarrow{Ox}$. Notice that if $S$ is $O$-symmetric, then $\phi(x)=-x$.

\begin{theorem} \label{tokio} 
Let $E\subset \Rn$  be an $O$-symmetric ellipsoid, $n\geq 3$,   and let $S\subset \Rn$ be an embedding of $\Sn$ in $\Rn$ such that $S$ is $O$-star and $E\subset \inte S$. Then for all $x\in S$ there exists an hyperplane $\Pi(x)$, $O\in \Pi(x)$, such that the relation
\begin{eqnarray}\label{japon}
S(E,x)\cap S(E,\phi(x))\subset \Pi(x).
\end{eqnarray} 
holds.
\end{theorem}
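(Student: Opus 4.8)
The plan is to normalise $E$ to the Euclidean ball and then compute the intersection of the two support cones by hand. An invertible linear map of $\Rn$ fixes $O$, is a bijection of the set of rays issuing from $O$, sends $O$-star surfaces to $O$-star surfaces and hyperplanes through $O$ to hyperplanes through $O$, is compatible with the formation of support cones and with the map $\phi$, and preserves inclusions; so, applying the linear map carrying the $O$-symmetric ellipsoid $E$ onto $B(n)$, I may assume $E = B(n)$. Fix $x \in S$. Since $E \subset \inte S$ we have $\|x\| > 1$, so $x = a u$ with $u \in \Sn$, $a = \|x\| > 1$, and by the $O$-star hypothesis $\phi(x) = -b u$ for a unique $b > 1$. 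Both $S(E,x)$ and $S(E,\phi(x))$ are surfaces of revolution about the axis $L(O,x) = \R u$, so I record a point $z$ by the pair $\tau(z) := \langle z, u\rangle$ and $\rho(z) := \|z - \tau(z)u\|$.

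Next I would write the cones down. The tangency relation $\langle p, x\rangle = \|p\|^{2}$ identifies the graze $\Sigma(E,x)$ with $\{\, p \in \Sn : \langle p, u\rangle = 1/a \,\}$; parametrising $S(E,x)$ as $x + \lambda(p - x)$ with $p \in \Sigma(E,x)$, $\lambda \ge 0$, and eliminating $\lambda$ gives
\[
S(E,x) = \{\, z : a - \tau(z) = \sqrt{a^{2}-1}\,\rho(z) \,\}, \qquad
S(E,\phi(x)) = \{\, z : b + \tau(z) = \sqrt{b^{2}-1}\,\rho(z) \,\},
\]
with the evident sign restrictions $\tau(z) \le a$, resp.\ $\tau(z) \ge -b$. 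Squaring and using $\rho(z)^{2} = \|z\|^{2} - \tau(z)^{2}$ exhibits each as a quadric cone of revolution, $(a - \tau(z))^{2} = (a^{2}-1)\bigl(\|z\|^{2} - \tau(z)^{2}\bigr)$, and likewise for $\phi(x)$.

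Finally I would intersect. Adding the two unsquared equations eliminates $\tau(z)$ and forces $\rho(z) = (a+b)/\bigl(\sqrt{a^{2}-1} + \sqrt{b^{2}-1}\bigr)$; substituting back, $\tau(z)$ is then pinned to a single constant as well (equivalently: divide the squared equations by $a^{2}-1$ and by $b^{2}-1$ and subtract, whereupon the $\|z\|^{2}$-terms cancel and only a relation in $\tau(z)$ survives). Hence $\tau(z)$ is constant on $S(E,x) \cap S(E,\phi(x))$, so this set lies in a hyperplane $\Pi(x)$ orthogonal to $L(O,x)$; in fact it is exactly the round $(n-2)$-sphere of the radius above centred on the axis inside $\Pi(x)$. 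The step I would check with most care is the assertion $O \in \Pi(x)$: unwinding the constant value of $\tau$, the hyperplane $\Pi(x)$ contains $O$ precisely when $x$ and $\phi(x)$ lie symmetrically about $O$ on $L(O,x)$, i.e.\ when $\phi$ restricted to this line is the antipodal map $z \mapsto -z$ — which in the $O$-symmetric case $\phi(x) = -x$ is automatic (then $a = b$ and the constant is $0$). It is here that the conditions on $S$ must be brought into play; undoing the initial linear map then returns the statement for the original $E$.
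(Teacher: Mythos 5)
Your computation is correct as far as it goes, and it is a genuinely different route from the paper's. The paper works via polarity: the grazes $\Sigma(E,x)$ and $\Sigma(E,\phi(x))$ lie in the parallel polar hyperplanes $\Gamma_x$, $\Gamma_{\phi(x)}$, and then Lemma \ref{alegria} (two cones over concentric homothetic ellipsoids with apexes on the line through the centers meet inside a single cross-sectional hyperplane, proved by comparing parallel sections) gives the containment in a hyperplane. You instead normalize $E$ to $B(n)$ by a linear map and solve the two rotationally symmetric cone equations explicitly; this yields the same containment $S(E,x)\cap S(E,\phi(x))\subset\Pi(x)$, together with the exact location and shape of the intersection (an $(n-2)$-sphere of radius $(a+b)/(\sqrt{a^{2}-1}+\sqrt{b^{2}-1})$ at height $\tau_{0}$ on the axis), which is more information than the paper's argument extracts.

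The gap is the clause $O\in\Pi(x)$, which you flag but defer to ``the conditions on $S$'' --- and those conditions cannot close it. Being $O$-star does not force $\|x\|=\|\phi(x)\|$ after normalization; your own formulas give $\tau_{0}=\bigl(a\sqrt{b^{2}-1}-b\sqrt{a^{2}-1}\bigr)/\bigl(\sqrt{a^{2}-1}+\sqrt{b^{2}-1}\bigr)$, which vanishes only when $a=b$. Moreover, since the intersection is an $(n-2)$-sphere of positive radius, its affine hull is the whole hyperplane $\{\langle z,u\rangle=\tau_{0}\}$, so when $a\neq b$ \emph{no} hyperplane through $O$ contains the intersection: for $E=B(n)$ and any $O$-star $S$ with $\|x\|\neq\|\phi(x)\|$ for some $x$, the conclusion $O\in\Pi(x)$ simply fails. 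So this is not a step you could have completed; what your computation actually shows is that the statement is only correct as written without the clause $O\in\Pi(x)$, or under the additional assumption that $S$ is $O$-symmetric (so that $\phi(x)=-x$, the situation of Theorem \ref{emperador} and of all later applications, where your $\tau_{0}=0$). Note that the paper's own proof is in the same position: Lemma \ref{alegria} only produces containment in \emph{some} hyperplane and says nothing about $O$, so the ``$O\in\Pi(x)$'' part is asserted but not proved there either. In a final write-up you should either state the result without that clause, or add the symmetry hypothesis and record your $a\neq b$ computation as the reason it is needed.
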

An interesting particular case of the Theorem \ref{tokio} is the following result which was mentioned in the abstract. 
\begin{theorem} \label{emperador} 
Let $E_1,E_2\subset \Rn$  be two $O$-symmetric homothetic ellipsoids, $n\geq 3$,  and $E_1\subset \inte E_2$. Then there exists a $O$-symmetric ellipsoid $E_3$ such that $E_3$ is homothetic to $E_1$ and,
for all $x\in E_2$, there exists an hyperplano $\Pi(x)$, $O\in \Pi(x)$, such that the relation
\begin{eqnarray}\label{espada}
S(E_1,x)\cap S(E_1,-x)= \Pi(x) \cap E_3.
\end{eqnarray} 
holds. Furthermore, let $E_2=\lambda E_1, \lambda> 0$. If $\lambda=\sqrt{2}$, then $E_2=E_3$, if $\sqrt{2}<\lambda$, then $E_3\subset E_2$ and if $\lambda<\sqrt{2}$, then $E_2\subset E_3$.
\end{theorem}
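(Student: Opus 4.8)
\textbf{Proof proposal for Theorem \ref{emperador}.} The plan is to normalize by an affine map and then compute the intersection of the two support cones explicitly. Since $E_1$ and $E_2$ are $O$-symmetric, the homothety carrying $E_1$ onto $E_2$ must be a dilation centered at $O$, so $E_2=\lambda E_1$ with $\lambda>1$. Choose a linear isomorphism $T$ of $\Rn$ with $T(E_1)=B(n)$; then $T(E_2)=\lambda B(n)$. Because $T$ sends ellipsoids to ellipsoids, homothetic pairs to homothetic pairs, hyperplanes through $O$ to hyperplanes through $O$, and support cones to support cones (that is, $T(S(K,x))=S(TK,Tx)$), it suffices to prove the statement for $E_1=B(n)$ and $E_2=\lambda B(n)$, and then transport the conclusion back by $T^{-1}$.

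Fix $x$ with $\|x\|=\lambda$ (the image of a point of $\bd E_2$) and write $u=x/\lambda$. First I would record that $\Sigma(B(n),x)=\{y\in\Sn:\langle x,y\rangle=1\}$, an $(n-2)$-sphere in the hyperplane $\{\langle x,\cdot\rangle=1\}$, so $S(B(n),x)$ is the circular cone with apex $x$, axis $\overrightarrow{Ox}$ and half-angle $\arcsin(1/\lambda)$; decomposing $z=tu+w$ with $w\perp u$, its equation is $t=\lambda-\sqrt{\lambda^2-1}\,\|w\|$. By central symmetry of $B(n)$ one has $S(B(n),-x)=-S(B(n),x)$, the cone $t=-\lambda+\sqrt{\lambda^2-1}\,\|w\|$, which is coaxial with the first. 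Slicing both cones by a hyperplane $\{t=\text{const}\}$ orthogonal to the common axis produces two spheres centered on that axis, of radii $(\lambda-t)/\sqrt{\lambda^2-1}$ and $(\lambda+t)/\sqrt{\lambda^2-1}$; these agree exactly when $t=0$. Hence
\[
S(B(n),x)\cap S(B(n),-x)=\Big\{\,z\in x^{\perp}:\ \|z\|=\tfrac{\lambda}{\sqrt{\lambda^2-1}}\,\Big\},
\]
an $(n-2)$-sphere of radius $r(\lambda):=\lambda/\sqrt{\lambda^2-1}$ lying in $x^{\perp}$. Taking $\Pi(x):=x^{\perp}$ and $E_3:=r(\lambda)\,\Sn$ (equivalently, in the original coordinates, $\Pi(x):=T^{-1}\big((Tx)^{\perp}\big)$ and $E_3:=T^{-1}\big(r(\lambda)\,\Sn\big)$, an $O$-symmetric ellipsoid homothetic to $E_1$), we get $\Pi(x)\cap E_3=\{z\in x^{\perp}:\|z\|=r(\lambda)\}$, which coincides with the set just computed; this is the required identity.

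For the final assertion, everything reduces to comparing $r(\lambda)=\lambda/\sqrt{\lambda^2-1}$ with $\lambda$: one checks at once that $r(\lambda)=\lambda$ iff $\lambda^2=2$, that $r(\lambda)<\lambda$ iff $\lambda>\sqrt2$, and that $r(\lambda)>\lambda$ iff $1<\lambda<\sqrt2$. In the normalized picture $E_2=\lambda B(n)$ and $E_3=r(\lambda)\,\Sn$, so $E_2=E_3$ when $\lambda=\sqrt2$, $E_3\subset E_2$ when $\lambda>\sqrt2$, and $E_2\subset E_3$ when $\lambda<\sqrt2$; since $T^{-1}$ preserves inclusions and homothety ratios, the same holds for the original ellipsoids. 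I do not anticipate a genuine obstacle; the only delicate points are (i) that $S(B(n),x)$ is a single conical nappe, which is precisely why the coaxiality and the level-slice argument are needed to rule out a second sphere of intersection, and (ii) the mild abuse by which $E_3$ plays the role both of the solid ellipsoid and of its bounding hypersurface, so that $\Pi(x)\cap E_3$ is to be read as $\Pi(x)\cap\bd E_3$, in line with the use of $\bd G$ in Theorem \ref{valor}.
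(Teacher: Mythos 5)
Your proposal is correct and follows essentially the same route as the paper: an affine normalization reducing to concentric balls, after which the intersection of the two coaxial support cones is identified as a sphere in $x^{\perp}$, with the hyperplane $\Pi(x)=x^{\perp}$ and $E_3$ obtained by pulling back. Your explicit computation of the radius $r(\lambda)=\lambda/\sqrt{\lambda^2-1}$ is a nice sharpening of the paper's appeal to rotational symmetry, since it makes the trichotomy at $\lambda=\sqrt{2}$ (and the boundary-versus-solid reading of $\Pi(x)\cap E_3$, which the paper leaves implicit) immediate.
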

The following problems arise of natural manner.
\begin{conjecture}\label{espada}
Let $K\subset \Rn$ be a convex body, $n\geq 3$,and let $S\subset \Rn$ be an embedding of $\Sn$ in $\Rn$ such that $S$ is $O$-star, $O\in \inte K$ and $K\subset \inte S$. Then for all $x\in S$ there exists an hyperplane $\Pi(x)$, $O\in \Pi(x)$, such that the relation
\begin{eqnarray}\label{armadura}
S(K,x)\cap S(K,\phi(x))\subset \Pi(x).
\end{eqnarray} 
holds.
Then $K$ is an ellipsoid.
\end{conjecture}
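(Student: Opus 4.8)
A natural line of attack on this conjecture is the following. The plan is to reproduce, in this weaker setting, the two-stage template that proves the strong-intersection theorem (Theorems \ref{great} and \ref{chiquita}): first establish that $K$ is symmetric about $O$, and only then that $K$ is an ellipsoid. Two features make this harder than in the strong-intersection case: one is only given the inclusion $S(K,x)\cap S(K,\phi(x))\subset\Pi(x)$ (and no associated body $G$), so $\Pi(x)$ is only loosely determined; and $S$ is merely an $O$-star embedded sphere, so the antipodal map $\phi$ need not be the restriction of a linear map. The latter must be dealt with first.

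\emph{Stage 1: central symmetry.} Fix $x\in S$ and write $\Gamma_x:=S(K,x)\cap S(K,\phi(x))\subset\Pi(x)$, with $O\in\Pi(x)$. The natural symmetry to use is the affine reflection $R^{\Pi(x)}_{x\,\phi(x)}$ introduced in the Introduction, which fixes $\Pi(x)$ (hence $\Gamma_x$) pointwise and sends $x$ to $-x$. The first sub-step is to show that the hypothesis already forces $\phi(x)=-x$, i.e. that $S$ is $O$-symmetric: for $K$ a ball one computes directly that $S(K,x)\cap S(K,y)$ lies in a hyperplane through $O$ exactly when $\|x\|=\|y\|$, and the same computation carried out in the $2$-planes through $L(x,\phi(x))$, using only that $O\in\inte K$, should give $\phi(x)=-x$ for arbitrary $K$. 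Once $S$ is $O$-symmetric, $R^{\Pi(x)}_{x,-x}$ exchanges the apexes $x$ and $-x$, and the problem becomes: show that $R^{\Pi(x)}_{x,-x}$ is a symmetry of $K$. Here I would introduce the bounded convex body $D_x:=C(K,x)\cap C(K,-x)\supseteq K$ (bounded because $L(x,-x)$ passes through $\inte K$), whose boundary consists of a cap of $S(K,x)$ and a cap of $S(K,-x)$ joined along $\Gamma_x\subset\Pi(x)$, and then feed the family $\{(\Pi(x),D_x)\}_{x\in S}$ into the characterization of central symmetry established in \cite{efrenconos} to conclude that $K$ is $O$-symmetric. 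The delicate point of this stage is controlling how the grazes $\Sigma(K,x)$ sit inside $D_x$ relative to $\Pi(x)$.

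\emph{Stage 2: recognizing the ellipsoid, and the main obstacle.} Assume now that $K$ (hence $S$) is $O$-symmetric, so that $S(K,-x)=-S(K,x)$ and $\Gamma_x$ is a centrally symmetric planar set contained in $\Pi(x)\ni O$; the hypothesis is then precisely the converse of the mechanism behind Theorem \ref{tokio}. Cutting with $2$-planes through $L(x,-x)$ turns the condition into a planar incidence statement about the two pairs of tangent lines drawn from $x$ and $-x$ to the sections $K\cap H$, and the goal is to upgrade this, after an affine normalization placing $K$ in a convenient position, to the assertion that the shadow boundaries of $K$ are planar for a sufficiently rich family of directions, after which the classical characterization of ellipsoids by planarity of shadow boundaries (Blaschke; Bianchi and Gruber) finishes the proof. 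This upgrade is the heart of the difficulty, and the step at which I expect to be stuck: in the strong-intersection theorem the equality $S(K,x)\cap S(K,\phi(x))=\Pi(x)\cap\bd G$ lets one transport planarity first to $G$ and then to $K$, whereas here the planar set one actually controls, $\Gamma_x$, is an intersection of two cone surfaces pushed \emph{past} $K$ and is not a graze of $K$. I would attack it in two steps: first treat the case $S=\lambda K$, where $\phi(x)=-x$ holds automatically and the explicit computation behind Theorem \ref{tokio} can be run backwards to constrain $K$ directly, and only then attempt to remove the hypothesis on $S$, presumably by reintroducing an auxiliary convex body in the role of $G$ and arguing along the lines of Theorem \ref{chiquita}.
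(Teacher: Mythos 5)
This statement is not proved in the paper at all: it is posed there as a conjecture, and even its special case in which $K$ and $S$ are additionally assumed to be $O$-symmetric is recorded as an open problem. Your text is, by your own account, a programme rather than a proof: Stage 2 ends by identifying the passage from the planarity of the sets $S(K,x)\cap S(K,\phi(x))$ to some usable ellipsoid characterization as the step at which you expect to be stuck. So the proposal, as written, does not establish the statement, and there is no proof in the paper against which it could be matched.

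Beyond that admitted gap, Stage 1 would not go through as described. First, the reduction to $\phi(x)=-x$ is only computed for a ball centred at $O$, where the intersection of the two support cones is an $(n-2)$-sphere in a hyperplane orthogonal to $L(x,\phi(x))$ whose position depends only on $\|x\|$, $\|\phi(x)\|$ and the radius; for a general body (already for a ball whose centre is an interior point other than $O$) the plane containing $S(K,x)\cap S(K,\phi(x))$ depends on the shape and position of $K$, so requiring $O\in\Pi(x)$ does not translate into $\|x\|=\|\phi(x)\|$, and the phrase ``should give $\phi(x)=-x$ for arbitrary $K$'' assumes essentially the symmetry you are trying to prove. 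Second, and more seriously, the appeal to the characterization of central symmetry of \cite{efrenconos} is not licensed: that result requires that the support cones $S(K,x)$ and $S(K,\phi(x))$ differ by a central symmetry, i.e.\ that some affine map carries one cone onto the other, whereas the hypothesis of the conjecture only places their intersection inside a hyperplane through $O$ and yields no map between the cones; feeding in the bodies $D_x=C(K,x)\cap C(K,\phi(x))$ does not meet its hypotheses either. In the paper this is exactly the point where the associated body $G$ and the equality $S(K,x)\cap S(K,y)=\Pi(x)\cap G$ are indispensable: central symmetry of $G$ is obtained first (Lemma \ref{porton}), and only the equality with $H\cap G$ converts the hypothesis into the statement that $C(K,x)$ and $C(K,-x)$ differ by a central symmetry (Lemmas \ref{lanza}--\ref{silvestre} and the proof of Theorem \ref{great}). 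Removing $G$ removes the anchor of that entire argument, which is precisely why the paper leaves your statement as a conjecture; a proof would require a genuinely new idea at both stages, not an adaptation of the strong-intersection machinery.
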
 
\begin{problem}
To prove or disproof Conjecture \ref{espada} assuming that $K$ and $S$ are $O$-symmetric.
\end{problem}
\begin{theorem} \label{valor} 
Let $K,S, G\subset \Rn$  be three convex bodies, $n\geq 3$, $O\in \inte K$ and $K\subset \inte G\subset \inte S$. Suppose that $K$ has the strong intersection property relative to $O$ and $S$ and with associated strictly convex body $G$. Then $K, S$ and $G$ are $O$-symmetric homothetic ellipsoids.
\end{theorem}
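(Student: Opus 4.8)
\emph{Proof strategy (outline).} The plan is to follow the two-stage scheme announced in the introduction: first upgrade the hypotheses to full central symmetry of $K$, $G$, $S$ (the auxiliary Theorem \ref{great}), and then, working with concentric $O$-symmetric bodies, force them to be ellipsoids by exploiting the rigidity that the relation \eqref{fuerza} imposes on the planar sections, invoking the auxiliary Theorem \ref{chiquita} for the propagation of ellipticity and the homothety.

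\emph{Step 1: central symmetry.} Because $G$ is strictly convex, for each $x\in\bd S$ the hyperplane $\Pi(x)$ occurring in \eqref{fuerza} is uniquely determined --- it is the affine hull of the $(n-2)$-dimensional set $\Pi(x)\cap\bd G = S(K,x)\cap S(K,y)$ --- and it depends continuously on $x$; likewise the partner $y=\phi(x)$ is unique. I would then study the affine reflection $R^{\Pi(x)}_{xy}$: since $\Pi(x)\cap\bd G$ is exactly the ``seam'' of the convex body $C(K,x)\cap C(K,y)$, this reflection interchanges the two support cones $C(K,x)$ and $C(K,y)$ across their common section, hence interchanges the grazes $\Sigma(K,x)$ and $\Sigma(K,y)$. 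Translating this into a diametral-chord/midpoint statement on $\bd K$, and letting $x$ range over $\bd S$, one produces precisely the configuration to which the central-symmetry criterion of \cite{efrenconos} applies; this yields that $K$ is $O$-symmetric. Then $\phi(x)=-x$ for every $x$, and since the slices $\Pi(x)\cap\bd G$ sweep out $\bd G$ while $\bd S$ is the locus of admissible apexes, $G$ and $S$ inherit the symmetry. This is the content of the series of lemmas leading to Theorem \ref{great}.

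\emph{Step 2: the section identity and ellipticity.} With $K\subset\inte G\subset\inte S$ now concentric and $O$-symmetric, \eqref{fuerza} reads $S(K,x)\cap S(K,-x)=\Pi(x)\cap\bd G$. Intersecting with $\Pi(x)$ and writing $K^{\pm x}:=C(K,\pm x)\cap\Pi(x)$ for the cross-sections of the two cones, one uses $O\in\inte K\subseteq\inte C(K,\pm x)$ to get $\bd_{\Pi(x)}K^{\pm x}=\bd C(K,\pm x)\cap\Pi(x)$ and, analogously, $\bd_{\Pi(x)}(G\cap\Pi(x))=\bd G\cap\Pi(x)$. Since $S(K,x)\cap S(K,-x)\subseteq\Pi(x)$, this forces $\bd_{\Pi(x)}(G\cap\Pi(x))=\bd K^{x}\cap\bd K^{-x}\subseteq\bd K^{x}$, and two convex bodies in $\Pi(x)$ with $O$ in their interiors whose boundaries are nested must coincide; hence
\[
G\cap\Pi(x)=C(K,x)\cap\Pi(x)=C(K,-x)\cap\Pi(x)\qquad(x\in\bd S).
\]
Equivalently, the central projection of $K$ from $x$ onto $\Pi(x)$ equals the one from $-x$, and (using $K=-K$) this common shadow is centrally symmetric about $O$ and equals $G\cap\Pi(x)$. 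Now reduce to $2$-planes: for a plane $H\ni O$ and $x\in H\cap\bd S$ one has $C(K,x)\cap H=C(K\cap H,x)$ and similarly for $-x$, so the identity descends to $H$, and it says that the quadrilateral circumscribed about $K\cap H$ with the antipodal diagonal $[x,-x]$ has its other two vertices on $\bd(G\cap H)$; since $S$ is now $O$-symmetric, the chords $[x,-x]$ range over all lines through $O$ as $x$ ranges over $\bd(S\cap H)$, and this metric constraint, propagated over all directions, pins down the support function of $K\cap H$ as that of an ellipse. Holding for every central $2$-section, this gives that $K$ is an ellipsoid; Theorem \ref{chiquita}, whose standing extra hypothesis ``some of $K,S,G$ an ellipsoid $\Rightarrow$ all three are'' is exactly what the converse direction of Theorem \ref{emperador} provides, then upgrades this to: $K,G,S$ are ellipsoids, and the section identity together with the dictionary in Theorem \ref{emperador} forces them to be concentric homothets.

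\emph{Expected main obstacle.} The hard part is Step 1: extracting central symmetry from the bare incidence relation \eqref{fuerza} with no regularity assumed on $K$ or $S$. The strict convexity of $G$ is what rescues the argument, since it is precisely what makes $\Pi(x)$, and hence the reflection $R^{\Pi(x)}_{xy}$, well defined and continuous in $x$, so that the family $\{R^{\Pi(x)}_{xy}\}_{x\in\bd S}$ is a genuine, well-behaved object to feed into the criterion of \cite{efrenconos}. Step 2 is, after symmetry is available, an essentially $2$-dimensional computation; the one delicate point there is that the passage from ``planar sections of $K$ are ellipses'' to ``$K,G,S$ are homothetic ellipsoids'' seems to genuinely require the extra hypothesis built into Theorem \ref{chiquita} rather than being purely self-contained.
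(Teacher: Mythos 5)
Your Step 1 and the overall architecture (symmetrize via Theorem \ref{great}, then finish with Theorem \ref{chiquita}) agree with the paper, and your intermediate identity $G\cap\Pi(x)=C(K,x)\cap\Pi(x)=C(K,-x)\cap\Pi(x)$ is correct and is also observed in the paper (the section $G_x$ is the projection of $K$ from $x$ onto $\Pi(x)$). The genuine gap is the core of your Step 2: the claim that, after restricting to a $2$-plane $H\ni O$, the ``circumscribed quadrilateral with other two vertices on $\bd(G\cap H)$'' condition pins down $K\cap H$ as an ellipse. This is not an argument, and the planar statement is in fact not rigid: if $K$ is $O$-symmetric and $x,-x$ are antipodal apexes in a fixed plane, the two crossing points of the tangent lines from $x$ and from $-x$ are automatically symmetric about $O$ (hence collinear with $O$), so within a single plane the condition merely says that these crossing points trace some closed curve, which one may simply take as $\bd(G\cap H)$; nothing forces an ellipse plane-by-plane. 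The rigidity in this problem is genuinely an $n\geq 3$ phenomenon: it comes from requiring the seams arising from all $2$-planes through a common diameter $[x,-x]$ to lie in one hyperplane $\Pi(x)$ and on the boundary of one and the same body $G$. Your proposal never exploits this coupling, so no body is ever shown to be an ellipsoid, and Theorem \ref{chiquita} cannot be invoked.

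For comparison, the paper's proof supplies exactly the missing step: using Lemmas \ref{porton}, \ref{lanza} and \ref{muneca} (the affine-reflection machinery and the fact that the support plane $\Gamma_x$ of $S$ at $x$ is parallel to $\Pi(x)$), it shows that for every plane $\Lambda\ni O$ there is a diameter $[p,-p]$ of $S$ with $C(K,p)\cap C(K,-p)=\Lambda\cap G$ and with $\Lambda\cap S$ contained in the shadow boundary $S\partial(S,L(p,-p))$; Kakutani's characterization then yields that $S$ is an ellipsoid, and Theorem \ref{chiquita} (whose extra hypothesis is discharged because $S$ has now been proved to be an ellipsoid, not because of any ``converse'' of Theorem \ref{emperador}) gives that $K,S,G$ are concentric homothetic ellipsoids. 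You would need either to reproduce an argument of this type (central sections lying in shadow boundaries, or some other bona fide ellipsoid characterization applied to $K$, $G$ or $S$) or to justify in detail how the hyperplane coupling across the pencil of $2$-planes through $[x,-x]$ forces ellipticity; as written, Step 2 does not prove it.
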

In \cite{egj} was proved the rather special case of the Theorem \ref{valor} when $G=S$.

\begin{theorem} \label{great} 
Let $K,S, G\subset \Rn$  be three convex bodies, $n\geq 3$, $O\in \inte K$  and $K\subset \inte G\subset \inte S$. Suppose that $K$ has the strong intersection property relative to $O$ and $S$ and with associated strictly convex body $G$. Then $G,K$ and $S$ are $O$-symmetric.
\end{theorem}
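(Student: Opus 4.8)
The plan is to establish the central symmetry of $\bd S$ first, then deduce the central symmetry of $K$ from the characterization in \cite{efrenconos}, and finally read off the central symmetry of $G$ from the defining relation \eqref{fuerza}; the first step is the hard one. For the set-up, fix $x\in\bd S$, put $y=\phi(x)$, and write $\Pi:=\Pi(x)$ and $M:=\Pi\cap\bd G=S(K,x)\cap S(K,y)$. Since $O\in\inte G$ and $G$ is strictly convex, $\Pi\cap G$ is an $(n-1)$-dimensional strictly convex body with $O$ in its relative interior, so $M=\rbd(\Pi\cap G)$ is a strictly convex compact hypersurface with $\aff M=\Pi$; applying \eqref{fuerza} to the pair $(y,\phi(y)=x)$ gives $\Pi(y)=\Pi(x)$ and shows $\phi$ is an involution. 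Next I would check that $L(x,y)\not\subset\Pi$: otherwise a $2$-plane $P\supset L(x,y)$ with $P\not\subset\Pi$ meets $\Pi$ in $L(x,y)$ itself, and the two points of $M\cap P$ — which lie on generators of $S(K,x)$, hence on lines supporting $K$ — would lie on $L(x,y)$, a secant of $K$ through $O\in\inte K$, which is impossible. Hence $x,y\notin\Pi$, each generator of $S(K,x)$ meets $\Pi$ in at most one point, and one sees that $S(K,x)\cap\Pi=M=S(K,y)\cap\Pi$; in other words the two cones circumscribed to $K$ along $\Sigma(K,x)$ and along $\Sigma(K,y)$ share the common planar cross-section $M$.

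The heart of the argument is to convert this common-section property, valid for every $x\in\bd S$, into the equality $\phi(x)=-x$. I would work in the $2$-planes $P$ containing $\ell:=L(x,y)$: there $\lambda:=P\cap\Pi$ is a line through $O$ transverse to $\ell$, the traces $S(K,x)\cap P$ and $S(K,y)\cap P$ are the two support rays of $\kappa:=K\cap P$ through $x$ and through $y$ respectively, and $M\cap P=\lambda\cap\rbd(G\cap P)$ consists of the two points of $\lambda$, one on each side of $O$, at which a support ray from $x$ crosses a support ray from $y$ and which moreover lie on $\bd G$. A series of lemmas should then establish, first, that the support rays which cross on $\lambda$ are the ones lying on the same side of $\ell$; next, the precise relation forced among $|Ox|$, $|Oy|$ and the position of $\kappa$ relative to $O$ by the requirement that these crossings land on the fixed hyperplane $\Pi$; and finally, letting $P$ vary over all planes through $\ell$ (equivalently $\lambda$ over all lines through $O$ in $\Pi$) and $x$ vary over $\bd S$, that these relations are consistent only when $O$ is the midpoint of $[x,y]$, i.e.\ $y=-x$. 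This last step is where I expect the main obstacle: a single planar section only ties $|Ox|$ and $|Oy|$ to the local shape of $K$ near that section — for instance if $\kappa$ were an off-centre disk, $|Ox|\neq|Oy|$ would be compatible with that single section — so the rigidity has to be extracted from the simultaneous consistency of all sections through $\ell$ together with $\bd S$ being a single closed hypersurface; and since only $G$ (not $K$ or $S$) is assumed strictly convex, the argument must accommodate possible flat pieces of $\bd K$ and $\bd S$, which complicates the bookkeeping of which support rays and which boundary points realise $M\cap P$.

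Once $\bd S$ is known to be $O$-symmetric, \eqref{fuerza} says that $S(K,x)\cap S(K,-x)$ is contained in a hyperplane through $O$ for every $x\in\bd S$, which is precisely the hypothesis of the characterization of central symmetry of \cite{efrenconos}; hence $K=-K$. Finally, $K=-K$ together with $\bd S=-\bd S$ gives $-M_x=S(-K,-x)\cap S(-K,x)=S(K,-x)\cap S(K,x)=M_x$, so each section $\Pi(x)\cap\bd G$ is $O$-symmetric; since through every boundary point of $G$ there passes one of the hyperplanes $\Pi(x)$ — which one checks from \eqref{fuerza} — we conclude $G=-G$, and the proof is complete.
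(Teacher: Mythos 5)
Your outline has two genuine gaps, and they sit exactly at the load-bearing points. First, the central symmetry of $S$: you reduce everything to showing that the simultaneous consistency of all planar sections through $\ell=L(x,y)$ forces $O$ to be the midpoint of $[x,y]$, and you yourself flag that you do not see how to extract this rigidity (your off-centre-disk remark shows a single section cannot do it). That missing step is the heart of the matter, so as written this is a plan, not a proof. The paper avoids the bisection question entirely: it never shows $O$ is the midpoint of the chord, but instead shows (Lemma \ref{porton}) that the hyperplane $\Gamma_x$ through $x$ parallel to $\Pi(x)$ is a \emph{supporting} hyperplane of $S$ --- if another point $x_0\in\bd S$ lay on $\Gamma_x$, comparing the sections $G_x$ and $G_{x_0}$ of the support cones produces a nondegenerate segment in $\bd G$, contradicting the strict convexity of $G$. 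Hence every chord of $S$ through $O$ has parallel supporting hyperplanes at its endpoints, i.e.\ is a diametral chord, and Hammer's theorem \cite{Hammer} gives $S=-S$. This is where the strict convexity of $G$ is actually used, and it is a much cheaper mechanism than the metric bookkeeping you propose.

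Second, your deduction of $K=-K$ misstates the hypothesis of the characterization in \cite{efrenconos}: that result assumes the support cones $S(K,x)$ and $S(K,y)$ \emph{differ by a central symmetry}, not merely that $S(K,x)\cap S(K,-x)$ lies in a hyperplane through $O$ (if the latter sufficed, the weaker ``intersection property'' alone would already force central symmetry, which is precisely what is not known). To verify the real hypothesis one needs the common section $\Pi(x)\cap G$ to be $O$-symmetric, i.e.\ one needs $G=-G$ \emph{before} $K=-K$: then $C(K,x)$ and $C(K,-x)$ are cones with apices $x$ and $-x$ over the same $O$-symmetric base, hence central images of each other. The paper gets $G=-G$ by swapping the roles of $G$ and $S$ (Lemmas \ref{lanza}, \ref{muneca}, \ref{silvestre} show $K$ has the strong intersection property relative to $O$ and $G$ with associated body $S$, which is strictly convex by Lemma \ref{ensueno}) and applying Lemma \ref{porton} a second time. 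Your order of deductions --- $K$ first, then $G$ from $K$ and $S$ --- cannot be repaired internally, since your argument for $G$'s symmetry uses $K$'s; and even that last step rests on the unproved covering claim that every point of $\bd G$ lies on some $\Pi(x)$, which needs a continuity argument you only assert.
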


\begin{theorem} \label{chiquita} 
Let $K,S, G\subset \Rn$  be three convex bodies, $n\geq 3$, $O\in \inte K$  and $K\subset \inte G\subset \inte S$. Suppose that $K$ has the strong intersection property relative to $O$ and $S$ and with associated strictly convex body $G$. Furthermore, suppose that some of the bodies $K, S$ and $G$ is an ellipsoid. Then the other two bodies are ellipsoids and  $K, S$ and $G$ are homothetic.
\end{theorem}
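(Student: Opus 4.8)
The plan is as follows. By Theorem~\ref{great} the hypotheses already force $K$, $S$ and $G$ to be $O$-symmetric; hence $\phi(x)=-x$ for $x\in\bd S$ and the strong intersection property reads: for every $x\in\bd S$ there is a hyperplane $\Pi(x)$ with $O\in\Pi(x)$ and $S(K,x)\cap S(K,-x)=\Pi(x)\cap\bd G$. Every invertible linear map of $\Rn$ carries this whole configuration — the three bodies, the support cones, the central lines $L(x,-x)$, the central hyperplanes $\Pi(x)$ — to a configuration of the same type, preserving strict convexity and the inclusions $K\subset\inte G\subset\inte S$. So I would apply a linear map sending whichever of $K,S,G$ is assumed to be an ellipsoid onto the unit ball $B(n)$, and then argue in three cases.

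\textbf{The case $K=B(n)$.} For $\|x\|>1$, $S(B(n),x)$ is the right spherical cone with apex $x$, axis $L(O,x)$ and half-angle $\arcsin(1/\|x\|)$, and $S(B(n),-x)$ is the congruent coaxial cone with apex $-x$; a direct computation (the computation behind Theorem~\ref{emperador}) shows that these two cones meet exactly in the $(n-2)$-sphere of radius $\rho(\|x\|):=\|x\|/\sqrt{\|x\|^{2}-1}$, centred at $O$ and lying in the hyperplane $x^{\perp}$. Hence $\Pi(x)=x^{\perp}$ necessarily, and $\bd G\cap x^{\perp}$ is a round $(n-2)$-sphere centred at $O$. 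As $x$ runs over $\bd S$, which is $O$-star and $O$-symmetric, the direction $x/\|x\|$ runs over all of $\Sn$, so every central hyperplane section of $G$ is an $(n-1)$-dimensional Euclidean ball; a standard argument — two such sections share a central chord which is a diameter of each, so all of them have one common radius $r$, and every boundary point of $G$ lies on one of them — gives $G=rB(n)$ with $r>1$. Then $\rho(\|x\|)=r$ for every $x\in\bd S$, and since $\rho$ is strictly decreasing on $(1,\infty)$ this pins $\|x\|$ to a single value, so $S$ too is a ball. Reversing the linear map, $K$, $S$, $G$ are concentric homothetic ellipsoids.

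\textbf{The cases $G=B(n)$ and $S=B(n)$.} Here the plan is to reduce to the previous case by proving that $K$ itself is an ellipsoid. The basic mechanism is a planar inscription. Fix $x\in\bd S$ and a point $q\in\Pi(x)\cap\bd G$ off $L(O,x)$, and work in the $2$-plane $P=\aff\{O,x,q\}$; since $O\in\inte K$, $\bd(K\cap P)=\bd K\cap P$. As $q,-q\in S(K,x)\cap S(K,-x)$, the four lines $L(x,q)$, $L(-x,q)$, $L(x,-q)$, $L(-x,-q)$ are tangent to $K$, hence support $K\cap P$ in $P$; they bound the parallelogram with vertices $x,q,-x,-q$ and diagonals $[x,-x]$, $[q,-q]$ through $O$, and $K\cap P$ is inscribed in it. Letting $P$ range over the $2$-planes through $O$, with $\pm x=P\cap\bd S$ and $\pm q$ the endpoints of the chord of $G$ cut by $P\cap\Pi(x)$, one obtains a circumscribed parallelogram for \emph{every} central $2$-section of $K$. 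When $G$ (resp.\ $S$) is a ball, the extra rigidity — the intersection set is a round sub-sphere of $B(n)$; resp., $x$ sweeps out all of $\Sn$ — should force the diagonals of these parallelograms to be conjugate directions of the corresponding section of $K$, equivalently, that $K\cap P$ touches each side at its midpoint. Granting this, the classical characterization of the ellipse as the unique centrally symmetric planar convex body all of whose circumscribed parallelograms are touched at the midpoints of their sides shows that every central $2$-section of $K$ is an ellipse; hence $K$ is an ellipsoid, and the first case completes the proof, homothety included.

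\textbf{The main obstacle.} The delicate step is the last one: the circumscribed-parallelogram data by itself is \emph{a priori} compatible with a non-ellipsoidal $K$, so the roundness of $G$ (or $S$) and the strict convexity of $G$ must be genuinely exploited. I expect the way through to be, first, to show $\Pi(x)=x^{\perp}$ — i.e.\ that $[x,-x]$ is a diametral chord of $K$ for every $x\in\bd S$ — using that $S(K,x)\cap S(K,-x)$ is a sub-sphere centred at $O$ together with a continuity/sweeping argument over $\bd S$; and then to combine this with a point-source shadow-boundary analysis of $K$ to force the tangency points of the parallelograms to be the midpoints of their sides. Making this rigidity argument precise is where the real work lies.
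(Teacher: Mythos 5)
Your first case ($K$ an ellipsoid, normalized to the unit ball) is correct and complete, and it is a genuinely more computational route than the paper's (which gets $G$ from elliptical central sections via Busemann's Theorem 16.12, homothety via Rogers' theorem, and $S$ via a locus-of-apexes argument). The problem is the other two cases. For $G=B(n)$ and $S=B(n)$ you only sketch a plan and explicitly defer the decisive step ("granting this\ldots", "making this rigidity argument precise is where the real work lies"), so two of the three cases of Theorem~\ref{chiquita} are not proved. Moreover, the classical fact you intend to fall back on is false as stated: it is not true that the ellipse is characterized by \emph{all} circumscribed parallelograms touching at the midpoints of their sides --- even a disc admits circumscribed rhombi (any non-square tangent rhombus) whose tangency points are not midpoints, so no convex body has that property. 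The usable statements in this circle of ideas (e.g.\ the Bertrand--Brunn midpoint/conjugate-diameter characterization, or "for each direction some circumscribed parallelogram touches at midpoints" --- which holds for every $O$-symmetric body and hence characterizes nothing) require a precise quantification, and extracting the needed midpoint tangency from the hypothesis is exactly the content you have not supplied; the parallelogram inscription by itself is, as you note, vacuous for a centrally symmetric body.

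For comparison, the paper closes these two cases with a different mechanism: it shows that sufficiently many support cones of $K$ are \emph{ellipsoidal} and then invokes the theorem of \cite{egj} (quoted as [MMJ]): if $K$ is $O$-symmetric, $K\subset \inte G'$, and every cone $C(K,x)$, $x\in\bd G'$, is ellipsoidal, then $K$ is an ellipsoid. When $G$ is an ellipsoid this is immediate, since $S(K,x)\cap S(K,-x)=\Pi(x)\cap\bd G$ is an ellipse lying on the cone $S(K,x)$, so all cones with apex on $\bd S$ are ellipsoidal. When $S$ is an ellipsoid the paper first proves a shadow-boundary lemma (Lemma~\ref{kiss}): for $x\in\bd G$ on the line $L(u)$, the cone $S(K,x)$ is the union of the lines $L(x,y)$ with $y$ in the shadow boundary of $S$ in direction $u$, which is a planar ellipse because $S$ is an ellipsoid; hence the cones with apex on $\bd G$ are ellipsoidal. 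Once $K$ is known to be an ellipsoid, the remaining body and the homothety follow as in your first case (or as in the paper's steps a)2, a)3). Some ingredient of this kind --- [MMJ] plus the shadow-boundary analysis, or a worked-out substitute for your midpoint rigidity --- is needed before your proposal can be considered a proof.
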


\section{Proof of Theorems \ref{tokio} and \ref{emperador}.}  
Let $G_1,G_2 \subset \Rn$  be two homothetic ellipsoids $O$-symmetric $G_2\subset G_1$, $n\geq 3$, let $x\in \mathbb{R}^{n+1}$ and let $y\in  L(O,x)$, $x\not=y$. We denote by $C_x(G_1)$, $C_y(G_2)$ the cones defined by $G_1$ and $x$ and $G_2$ and $y$, respectively, that is, $C_x(G_1) := \{x + \lambda(z - x) : z \in G_1, \lambda \geq  0\}$,  
$C_y(G_2) := \{y + \lambda(z - y) : z \in G_2, \lambda \geq  0\}$. In order to prove the Theorem \ref{tokio} we need the following lemma. 
\begin{lemma}\label{alegria}
The intersection $C_x(G_1)\cap C_y(G_2)$ is contained in ah hyperplane. 
\end{lemma}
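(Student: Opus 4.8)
The plan is to reduce the statement to a pure affine-geometry computation by exploiting the affine invariance of the whole configuration. Since $G_1$ and $G_2$ are $O$-symmetric homothetic ellipsoids, there is a linear map $T$ sending $G_1$ to the unit ball $B(n)$ and $G_2$ to a ball $rB(n)$ with $0<r\le 1$; the cones $C_x(G_1)$ and $C_y(G_2)$ are sent to the cones $C_{Tx}(B(n))$ and $C_{Ty}(rB(n))$, and $Ty$ still lies on the line $L(O,Tx)$. The conclusion ``the intersection lies in a hyperplane'' is preserved by $T$, so it suffices to prove the lemma when $G_1=B(n)$ and $G_2=rB(n)$, with apices $x$ and $y=tx$ for some scalar $t$ (allowing $t$ negative or positive, $|x|>1$, $|y|>r$). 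A further rotation lets me assume $x=(a,0,\dots,0)$ with $a>1$, so that everything is rotationally symmetric about the $x_1$-axis.

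Next I would write down the cones explicitly. The support cone of $B(n)$ from $x=(a,0,\dots,0)$ is a circular cone with axis the $x_1$-axis; a point $z$ lies on $S(B(n),x)$ iff the line through $x$ and $z$ is tangent to the sphere, which is the quadratic condition $|z|^2\,|x|^2 - \langle z,x\rangle^2 = |x-z|^2\cdot 1$ rearranged, or more cleanly: $z\in C_x(B(n))$ iff $|z|^2 \le$ (something), and on the boundary cone one gets a homogeneous quadratic equation in the coordinates of $z-x$. Writing $z=(z_1,z')$, the cone $S(B(n),x)$ has equation of the form $(z_1-a)^2 \beta = |z'|^2$ for a constant $\beta=\beta(a)$, i.e. $|z'|^2 = \beta (z_1-a)^2$, and similarly $S(rB(n),y)$ has equation $|z'|^2 = \gamma (z_1-ta)^2$ for a constant $\gamma=\gamma(r,ta)$. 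A point in the intersection satisfies both, hence $\beta(z_1-a)^2 = \gamma(z_1-ta)^2$. This is a quadratic in $z_1$ whose solution set is (generically) two values $z_1=c_1$ and $z_1=c_2$; each value $z_1=c_i$ is a hyperplane $\{x_1=c_i\}$. So the intersection lies in the union of two parallel hyperplanes, not obviously a single one.

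The key point — and the main obstacle — is to show that in fact only one of these two hyperplanes actually meets the intersection (so that the intersection is genuinely contained in one hyperplane), or to identify precisely which root is geometrically relevant. The resolution is a sign/monotonicity argument: on the support cone $S(B(n),x)$ the relevant sheet is the one ``facing'' the ball, parametrized by $z_1$ ranging over an interval determined by the tangency, and similarly for $S(rB(n),y)$; because $G_2\subset G_1$ and $y\in L(O,x)$, exactly one of the two algebraic solutions $c_1,c_2$ lies in the common range of admissible $z_1$-values for both sheets, while the other corresponds to the ``wrong'' nappe or to the opposite-side tangent lines. I would make this precise by tracking, for each cone, the interval of $z_1$ for which the circular cross-section has positive radius and lies on the physically correct side, then checking that these two intervals overlap in a way that selects a unique root. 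Once that is done, the intersection lies in the single hyperplane $\{x_1 = c\}$, which is what is claimed; translating back by $T^{-1}$ gives the general statement. I expect the bookkeeping of which nappe is which (and the degenerate cases $t<0$, where $y$ and $x$ are on opposite sides of $O$) to be the only delicate part; the algebra itself is a short computation with the two quadratic forms.
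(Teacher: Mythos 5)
Your core computation is sound, but as written the proof addresses a different configuration from the one in the lemma. In the lemma, $G_1,G_2$ lie in the hyperplane $\mathbb{R}^{n}\subset\mathbb{R}^{n+1}$ and the apices $x,y$ are points of $\mathbb{R}^{n+1}$ off that hyperplane, $C_x(G_1)$ being the cone \emph{generated by} $G_1$ from $x$; this is exactly how the lemma is later used, since $S(E,x)$ is rewritten as the cone over the graze $\Gamma_x\cap E$, a lower-dimensional ellipsoid whose hyperplane does not contain the apex. You instead place $x,y$ in the same $\mathbb{R}^{n}$ as the balls (hence the extraneous hypotheses $|x|>1$, $|y|>r$) and replace $C_x(G_1)$ by the tangent (support) cone of $B(n)$ from $x$, which is a different object; your map $T$ is applied in the wrong ambient space. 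The two settings are in fact affinely equivalent: normalizing inside $\mathbb{R}^{n+1}$ (send $G_1,G_2$ to concentric spheres in the base hyperplane and shear $L(O,x)$ onto the perpendicular axis through $O$) turns $C_x(G_1)$ and $C_y(G_2)$ into coaxial right circular single-nappe cones with apices on the axis, which is precisely the configuration your algebra treats; but this bridge is not in your text and must be supplied. (For either reading one must also take $G_1,G_2$ as ellipsoid \emph{surfaces}, i.e.\ the cones as lateral surfaces: the solid cones intersect in a full-dimensional set, so the statement is false for them; the paper is implicit about this too.)

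Your second worry, the two roots of $\beta(z_1-a)^2=\gamma(z_1-ta)^2$, is genuine but easier to dispose of than you suggest, and it marks the real difference from the paper's route. Since the cones are by definition single nappes ($\lambda\ge 0$), on the admissible nappes $|z'|$ equals a nonnegative multiple of $(a-z_1)$, respectively of $\pm(ta-z_1)$ with one fixed sign, so both square roots have determined signs and the system collapses to a single linear equation in $z_1$: at most one admissible level, with only the same-side/opposite-side dichotomy to check. The paper avoids coordinates altogether: it slices both cones by the hyperplanes $\Pi_\lambda$ parallel to the common base hyperplane, observes that the two sections at each level are \emph{concentric} homothets of $G_1$, that two concentric homothetic ellipsoid surfaces sharing a point coincide, and that (the homothety ratios being distinct nonnegative affine functions of $\lambda$ on the nappes) this happens at most at one level $\lambda_0$, so the whole intersection lies in $\Pi_{\lambda_0}$. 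Your argument is essentially the coordinate version of this; it buys an explicit formula for the level at the price of the nappe bookkeeping, which you left as a sketch. With the reduction redone in $\mathbb{R}^{n+1}$ and the sign argument written out (two sentences), your approach does prove the lemma.
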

\begin{proof}
For all $\lambda \in \R$, the sections $\Pi_{\lambda}\cap C_x(G_1)$ and $\Pi_{\lambda}\cap C_y(G_2)$ are homothetic ellipsoid with centres at $L(O,x)$, where $\Pi_{\lambda}:=\{(x_1,...,x_{n+1})\in \mathbb{R}^{n+1}: x_{n+1}=\lambda\}$. Let $\lambda_0$ be a real number such that 
$\Pi_{\lambda_0}\cap C_x(G_1) \cap C_y(G_2)\not=\emptyset$. Then the homothetic sections $\Pi_{\lambda_0}\cap C_x(G_1)$, $\Pi_{\lambda_0}\cap C_y(G_2)$ are concentric 
 and it have a common point. Thus 
 \[
 \Pi_{\lambda_0}\cap C_x(G_1)=\Pi_{\lambda_0}\cap C_y(G_2).
 \]
From here, it is clear that $C_x(G_1)\cap C_y(G_2)\subset \Pi_{\lambda_0}$.
\end{proof}
\textbf{Proof of Theorem \ref{tokio}.} For $x\in \Rn$ we denote by $\Gamma_x$ the polar hyperplane of $E$ corresponding to the pole $x$. Notice that
\[
\Sigma(E,x)=\Gamma_x\cap E \textrm{  }\textrm{ and } \textrm{  }\Sigma(E,\phi(x))=\Gamma_{\phi(x)}\cap E.
\]
Furthermore, since $\phi(x)\in L(O,x)$, the hyperplanes $\Gamma_x$ and 
$\Gamma_{\phi(x)}$ are parallel (referencia). The Theorem \ref{tokio} will follow from Lemma \ref{alegria} applied to the homothetic and concentric ellipsoids  $\Gamma_x \cap E$ and $S(E,\phi(x))\cap \Gamma_x$ which defined the cones $S(E,x)=C_x(\Sigma(E,x))$ and $S(E,\phi(x))=C_{\phi(x)}(\Sigma(E,\phi(x)))$.
\qed

\textbf{Proof of Theorem \ref{emperador}}. Let $A:\Rn \rightarrow \Rn$ be an affine map such that $A(E_2)=\Sn$ and $\bar{E}_1:=A(E_1)$ is a sphere concentric with $\Sn$. By virtue of the symmetry of the sphere, it follows that, for every $x\in \mathbb{S}^n$, the  set
$S_x:=S(\bar{E}_1,x)\cap S(\bar{E}_1,-x)$ is a sphere in $x^\perp$. It is clear the $\bar{E}_3=\bigcup_{x\in \Sn}S_x$ is a sphere concentric with $\Sn$ (notice that, for every $x\in \mathbb{S}^n$, the relation $S_x=\bar{E}_3 \cap x^\perp $ holds). Thus $E_3:=A^{-1}(\bar{E}_3)$ is the ellipsoid which satisfies the condition of Theorem \ref{emperador}. 

On the other hand, by virtue that $E_2=\lambda E_1$ it follows that $\Sn=\lambda \bar{E}_1$.  If $\lambda =\sqrt{2}$, then $S_x=\mathbb{S}^n \cap x^{\perp}$ (Notice that, in dimension 2, $\bar{E}_1$ is inscribed in the square inscribed in $\mathbb{S}^1$ and $S_x$ is the diameter perpendicular to $x$). Thus 
$\bar{E}_3=\mathbb{S}^n$ and, consequently, $E_3=E_2$. If $\sqrt{2}<\lambda$, then, for every $x\in \mathbb{S}^n$, $ S_x\subset  \mathbb{S}^n \cap x^{\perp}$. Therefore $   \bar{E}_3\subset \mathbb{S}^n$, i.e., $E_3\subset E_2$. If $ \lambda<\sqrt{2}$, then, for every $x\in \mathbb{S}^n$, $\mathbb{S}^n \cap x^{\perp}\subset S_x$. Hence $\mathbb{S}^n\subset \bar{E}_3$, i.e., $E_2\subset E_3$.
\qed
 
\section{Proof of Theorem \ref{great} for dimension 3.} 
In the proof of the Theorems \ref{great} we will assume that $O$ is the origin of a system of coordinates. The proof that $K$ is centrally symmetric for the case $n=3$ will be given in a serie of steps: 
\begin{itemize}
\item [i)] We will prove, in the Lemma \ref{porton}, that if
the convex body $K$ has the strong intersection property relative to the point $O\in \inte K$ and the body $S$, $K\subset \inte S$, and with associated strictly convex body $G$, $K\subset \inte G\subset \inte S$, then the body $S$ is centrally symmetric. 

\item [ii)] In Lemma \ref{ensueno} we demonstrate that the body $S$ is strictly convex.
 
\item [iii)] In the Lemma \ref{lanza} we will prove, that if $x,y\in \bd S$, for which $O\in L(x,y)$, and there exists an affine reflexion, with respect to the hyperplane $H$ and parallel to $L(x,y)$, such that it maps the cone $C(K,x)$ in to the cone $C(K,y)$, this affine reflexion sent the graze 
$\Sigma(K,x)$ in to the graze $\Sigma(K,y)$. 
\item [iv)] In Lemma \ref{muneca}, we will prove a kind of \textit{symmetry} with respect to plane of affine reflexion mentioned in the Lemma \ref{lanza}, i.e.,

\textit{Let $p,q\in \bd S$ such that $O\in L(p,q)$ and there exists a plane $\Lambda$, $O\in \Lambda$, and an affine reflexion $R^{\Lambda}_{pq}: \Rn \rightarrow \Rn$ for which 
\[
R^{\Lambda}_{pq}(C(K,p))=C(K,q).
\]
If, for $x,y\in \bd S$, $O\in L(x,y)$ and $L(x,y)\subset \Lambda$, there exists a plane $H$ and an affine reflexion $R^H_{xy}: \Rn \rightarrow \Rn$ such that 
\[
R^H_{xy}(C(K,x))=C(K,y),
\]
then the line $L(p,q)$ is contained in $H$.
}

\item [v)] The convex bodies $K$ has the strong intersection property relative the point $O$ and $G$ with associated body $S$. 
\end{itemize}

The next theorem is due to Hammer \cite{Hammer} and it will be used in the proof of the Lemma \ref{porton}.

\textit{Let $K\subset \Rn$, $n\geq 2$, be a convex body. If every chord through $O \in K$ is a diametral chord, then K is centrally symmetric with center at $O$.}

\begin{lemma}\label{porton}
Let $K,S\subset \Rn$  be two convex bodies, $n\geq 3$, $O\in \inte K$ and $K\subset \inte S$. Suppose that the convex body $K$ has the strong intersection property relative to the point $O$ and the body $S$ and with associated strictly convex body $G$, $K\subset \inte G$. Then, the body $S$ is centrally symmetric.
\end{lemma}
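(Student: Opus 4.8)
The plan is to apply Hammer's theorem quoted just above the statement: it suffices to show that \emph{every chord of $S$ through $O$ is a diametral chord of $S$}, i.e.\ that for any $x,y\in\bd S$ with $O\in(x,y)$ there are parallel support hyperplanes of $S$ at $x$ and at $y$. So fix such a central chord $[x,y]$. Since $S$ is convex and $O\in\inte S$, the line $L(x,y)$ meets $\bd S$ only in $x$ and $y$, hence the point associated to $x$ by the strong intersection property is $y$ itself, and we obtain a hyperplane $\Pi:=\Pi(x)\ni O$ with $S(K,x)\cap S(K,y)=\Pi\cap\bd G=:E$. As $O\in\Pi\cap\inte G$ and $G$ is strictly convex, $E$ is a strictly convex $(n-2)$-sphere in $\Pi$ bounding there the convex body $W:=\Pi\cap G$, with $O\in\rint W$.

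The first, routine, step is to describe $Q:=C(K,x)\cap C(K,y)$. One has $x,y\in Q$ (each lies on the ray from the other apex through $O\in K$) and $E\subseteq Q$, so $\conv(\{x,y\}\cup E)\subseteq Q$; conversely, following a boundary ray of $C(K,x)$ from $x$ until it exits $C(K,y)$, the exit point necessarily lies on $S(K,x)\cap S(K,y)=E$, and symmetrically with $x,y$ interchanged, so $\bd Q\subseteq\conv(\{x,y\}\cup E)$. Thus $Q$ is the bipyramid with apexes $x,y$ and equator $W$, whose lateral boundary is the union of the cones $\bigcup_{e\in E}[x,e]$ and $\bigcup_{e\in E}[y,e]$; extending the segments to rays gives
\[
C(K,x)=C(W,x),\qquad C(K,y)=C(W,y),
\]
so the two support cones of $K$ are the cones over one and the same planar section $W$ of $G$. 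Moreover, as $x,y\in\bd S$ while $E\subset\bd G\subset\inte S$, the open segments $(x,e]$ and $(y,e]$ lie in $\inte S$, so $Q\subseteq S$ and $Q\cap\bd S=\{x,y\}$.

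The heart of the argument is to convert this cone information — expressed entirely through $K$ and the section $W$ of $G$ — into the \emph{metric} conclusion that $S$ has parallel support hyperplanes at $x$ and $y$. The plan: for a unit direction $u$ parallel to $\Pi$, let $\sigma^{+}(u),\sigma^{-}(u)$ be the two support flats of the strictly convex body $W$ inside $\Pi$ orthogonal to $u$, with contact points $p^{\pm}(u)\in E$; the hyperplane $\aff(\{x\}\cup\sigma^{+}(u))$ supports $C(K,x)=C(W,x)$, hence bounds a halfspace containing $K$, and likewise $\aff(\{y\}\cup\sigma^{-}(u))$ relative to $C(K,y)$. Using strict convexity of $W$ (so that these hyperplanes vary continuously with $u$) one produces a direction $u_0$ for which the two are parallel — equivalently, an affine reflection $R^{H}_{xy}$, parallel to $L(x,y)$, with $R^{H}_{xy}(C(K,x))=C(K,y)$ and $R^{H}_{xy}(x)=y$ — and then, since $C(K,x)\subseteq C(S,x)$, $C(K,y)\subseteq C(S,y)$, and the bipyramid $Q$ is wedged inside $S$ meeting $\bd S$ only at $x$ and $y$, one reads off a pair of parallel support hyperplanes of $S$ at $x$ and $y$. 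Hammer's theorem then gives that $S$ is $O$-symmetric. I expect this last passage to be the main obstacle: the bipyramid/cone bookkeeping is elementary, but obtaining the reflection $R^{H}_{xy}$ and, above all, upgrading the resulting slab from one that merely sandwiches $K$ (or $Q$) to one consisting of genuine support hyperplanes of $S$ is where the nesting $K\subset\inte G\subset\inte S$ and the strict convexity of $G$ must be used in an essential, non-formal way.
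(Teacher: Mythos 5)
Your overall strategy coincides with the paper's first move: both reduce the lemma, via Hammer's theorem, to showing that every chord $[x,y]$ of $S$ through $O$ is a diametral chord, and your preliminary bookkeeping (the partner of $x$ is the second point of $L(x,O)\cap\bd S$, the two support cones are cones over the common central section $W=\Pi\cap G$, and their intersection is the bipyramid over $W$ with apexes $x,y$ lying in $S$ and meeting $\bd S$ only at $x,y$) is correct. But the decisive step is missing, and you say so yourself: you never actually produce parallel support hyperplanes \emph{of $S$} at $x$ and $y$. The hyperplanes $\aff(\{x\}\cup\sigma^{+}(u))$ and $\aff(\{y\}\cup\sigma^{-}(u))$ support the cones $C(K,x)$, $C(K,y)$ (hence $K$), but a hyperplane through $x$ supporting $C(K,x)$ need not support $S$, and no choice of $u_0$ or affine reflection fixes this. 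Indeed the gap cannot be closed by any argument of the kind you sketch, because you use the strong intersection hypothesis only at the single pair $(x,y)$: one can bulge $\bd S$ slightly near $x$ (keeping $x\in\bd S$, $K$, $G$, $\Pi$ unchanged) without disturbing any of the data you invoke, yet destroying the existence of parallel support hyperplanes of $S$ at $x$ and $y$. So controlling $\bd S$ near $x$ forces you to apply the hypothesis at \emph{other} boundary points of $S$, which your plan never does. (A further small flaw: the affine map fixing $\Pi$ and carrying $C(K,x)$ onto $C(K,y)$ is an involution, i.e.\ a genuine affine reflection, only when the midpoint of $[x,y]$ lies in $\Pi$, which is essentially what is being proved, not a tool available at this stage.)

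The paper's proof supplies exactly the missing idea, and it is worth comparing. The candidate parallel support hyperplanes are not slab hyperplanes attached to a direction $u$ in $\Pi$, but simply $\Gamma_x$ and $\Gamma_y$, the hyperplanes through $x$ and through $y$ parallel to the common hyperplane $\Pi_x=\Pi_y$ (note $K\cap\Gamma_x=\emptyset$ since $K$ is inscribed in the cone with apex $x$). To show $\Gamma_x$ supports $S$, one argues by contradiction: if some line $L\subset\Gamma_x$ through $x$ contained a second point $x_0\in\bd S$, one applies the strong intersection property \emph{at $x_0$} (with its own partner and hyperplane $\Pi_{x_0}$) and, using a support line $H$ of $G_x$ in $\Pi_x$ parallel to $L$ with contact point $w$ and a contact point $a\in\bd K$ on $L(x,w)$, produces a second point $\bar w\ne w$ of $\bd G$ lying on a support line of the section $G_{x_0}$, giving a segment $[w,\bar w]$ in $\bd G$ and contradicting the strict convexity of $G$. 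This is where the hypotheses $K\subset\inte G\subset\inte S$ and the strict convexity of $G$ enter in the essential way you anticipated; your proposal, as it stands, does not reach that point.
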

\begin{proof}
In order to prove that the body $S$ is centrally symmetric we are going to prove that every chord $[a,b]$ of $S$ with $O\in [a,b]$ is a diametral chord. In this case, by the Theorem of Hammer $S$ is centrally symmetric. 

We introduce some notation. For every $x \in \bd S$, we denote by $\Pi_x$ the plane such that $\Pi_x\cap G=S(K,x)\cap S(K,y)$ given by the definition of strong intersection property, by $G_x$ the section $\Pi_x\cap G$ and by $\Gamma_x$ the plane through $x$ parallel to $\Pi_x$. Notice that $\Pi_x=\Pi_y$ and, consequently, $\Gamma_x$ and $\Gamma_y$ are parallel. On the other hand, we observe, by virtue that we can interprete $G_z$ as the projection of $K$ from $z$ onto $\Pi_z$, that  
\begin{eqnarray}\label{grandeza}
K\cap \Gamma_z=\emptyset
\end{eqnarray}
(The body $K$ is inscribed in the cone $S(K,z)$ which has vertex at $z$).

Let $x \in \bd S$, we are going to demonstrate that $\Gamma_x$ is a supporting plane of $S$. Let $L\subset \Gamma_x$ be a line passing through $x$. We will show that $L$ is supporting line of $S$. On contrary, let us assume that there exists a point $x_0\in \bd S$ in $L$, $x_0\not=x$. Let $H\subset \Pi_x$ be a supporting line of $G_x$ parallel to $L$ and which intersect $G_x$ at $w$. Since $l(x,w)$ is supporting line of $K$, there exists $a\in \bd K$ in $l(x,w)$ and the plane $\aff\{x,H\}$ is supporting plane of $K$. 

First, we suppose that $\Pi_x=\Pi_{x_0}$, i.e., $G_x=G_{x_0}$. By virtue that $x\not=x_0$, it follows that $l(x,a)\not=l(x_0,a)$. Thus the point $\bar{w}:=l(x_0,a)\cap H$ is such that $\bar{w}\in  G_{x_0}$ and $\bar{w}\not=w$. Since $H$ is supporting line of $G_x$ it follows that $[w,\bar{w}]\subset G_x$ but this contradicts the strictly convexity of $G$.
  
Now we suppose that $\Pi_x\not=\Pi_{x_0}$. Since the plane $\aff\{x,H\}$ is supporting plane of $K$, the line $\bar{H}:=\aff\{x,H\} \cap \Pi_{x_0}$ is supporting line of $G_{x_0}$ and it is passing through $w$. By virtue that $x\not=x_0$, it follows that $l(x,a)\not=l(x_0,a)$. Thus the point $\bar{w}:=l(x_0,a)\cap \bar{H}$ is such that 
$\bar{w}\in  G_{x_0}$ and $\bar{w}\not=w$. Given that $\bar{H}$ is supporting line of $G_{x_0}$ and $w,\bar{w}\in \bar{H} \cap G_{x_0}$, it follows that $[w,\bar{w}]\subset  G_{x_0}$ but contradicts the strictly convexity of $G$. 

This completes the proof the $\Gamma_z$ is a supporting plane of $S$. 
\end{proof} 

\begin{lemma}\label{ensueno}
The body $S$ is strictly convex.
\end{lemma}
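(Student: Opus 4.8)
The plan is to derive Lemma~\ref{ensueno} from the argument already carried out in the proof of Lemma~\ref{porton}, whose conclusion is in fact stronger than the bare statement (``$\Gamma_x$ is a supporting plane of $S$'') that was recorded from it there. Recall that in that proof we fixed an \emph{arbitrary} $x\in\bd S$ --- for which $\Pi_x$, hence $\Gamma_x$, is well defined, since $K$ has the strong intersection property relative to $O$ and $S$ --- and an \emph{arbitrary} line $L\subset\Gamma_x$ through $x$, and we reached a contradiction with the strict convexity of the associated body $G$ from the sole hypothesis that some point $x_0\in L\cap\bd S$ with $x_0\neq x$ exists (the two cases $\Pi_x=\Pi_{x_0}$ and $\Pi_x\neq\Pi_{x_0}$ being handled separately, each producing a nondegenerate segment inside $G_x$ or $G_{x_0}$). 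Hence that argument really shows $L\cap\bd S=\{x\}$ for every line $L\subset\Gamma_x$ passing through $x$.

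First I would convert this into a statement about the whole hyperplane $\Gamma_x$: any $p\in\Gamma_x$ with $p\neq x$ lies on the line $L(x,p)\subset\Gamma_x$, which passes through $x$, so by the above $p\notin\bd S$; therefore $\Gamma_x\cap\bd S=\{x\}$. Since $S$ is convex, this forces $\Gamma_x\cap S=\{x\}$: were $\Gamma_x\cap S$ a nondegenerate segment, it would have two endpoints in $\bd S$, at least one of them distinct from $x$. A hyperplane meeting a convex body in exactly one of its boundary points supports the body there, so $x$ is an exposed point of $S$. As $x\in\bd S$ was arbitrary, every boundary point of $S$ is exposed; in particular $\bd S$ contains no segment, i.e.\ $S$ is strictly convex, which is the assertion of Lemma~\ref{ensueno}.

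I do not expect a genuine obstacle here; the only two points deserving care are that $\Pi_x$ (hence $\Gamma_x$) is available at \emph{every} boundary point of $S$ --- precisely the content of the strong intersection property --- and the elementary step from ``$L$ carries no boundary point of $S$ other than $x$'' to ``$\Gamma_x\cap S=\{x\}$''. If a self-contained proof were preferred, one could instead argue directly by contradiction: supposing $[a,b]\subset\bd S$ with $a\neq b$, choose $x$ in the relative interior of $[a,b]$; the plane $\Gamma_x$ supports $S$ at $x$ by Lemma~\ref{porton}, so $a,b\in\Gamma_x$ and hence $L(a,b)\subset\Gamma_x$, and now the very same computation as in the proof of Lemma~\ref{porton}, applied to the line $L=L(a,b)$ and the second boundary point $x_0=a$, contradicts the strict convexity of $G$.
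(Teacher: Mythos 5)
Your proposal is correct and follows essentially the same route as the paper: the paper likewise assumes a segment $[a,b]\subset\bd S$, places a point $z$ in its relative interior, uses Lemma~\ref{porton} to get $[a,b]\subset\Gamma_z$, and then repeats the argument of Lemma~\ref{porton} with the second boundary point on that line to contradict the strict convexity of $G$ --- exactly your ``self-contained'' variant. Your main packaging (every boundary point of $S$ is exposed because the Lemma~\ref{porton} argument shows $L\cap\bd S=\{x\}$ for every line $L\subset\Gamma_x$ through $x$) is just a mild restatement of the same idea and is fine.
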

\begin{proof} 
On the contrary to the Lemma statement, let us assume that $S$ is not strictly convex, that is, we assume that there exists a line segment $[a,b]\subset \bd S$, $a\not=b$. Let $z\in \inte[a,b]$. By Lemma \ref{porton}, $\Pi_z$ and $\Gamma_z$ are parallel and $[a,b]\subset \Gamma_z$, otherwise, $a$ and $b$ would be in different half spaces of the two defined by $\Gamma_z$. Now we procede in analogous way as in the proof of Lemma \ref{porton} and rich to the contradiction. Then $S$ is strictly convex.
\end{proof} 

The Lemmas  \ref{lanza} and \ref{muneca} below, used in the proof of Lemma \ref{silvestre}, are in the spirit of the next result \cite{efrenconos}, which will be used in the proof of Theorem \ref{great} (from our point of view, it is interesting and convenient to present it in terms of affine reflexions).

\textbf{Characterization of central symmetry.}

\textit{Let $K\subset \Rn$, $n\geq 3$ be a strictly convex body and let $S\subset \Rn$ be a hypersurface which is the image of an embedding of the sphere $\mathbb{S}^{n-1}$, such that $K$ is contained in the interior of $S$. Suppose that, for every $x \in S$, there exists $y\in S$ such that 
the support cones $S(K,x)$ and $S(K,y)$ differ by a central symmetry. Then $K$ and $S$ are centrally symmetric and concentric.}

\begin{lemma}\label{lanza}
Let $S$, $K$ be two convex bodies in $\Rn$, $n\geq 3$, $K$ strictly convex and let $O\in \inte K$. Suppose that $K\subset \inte S$ and for every pair of points $p,q\in \bd S$, for which $O\in L(p,q)$, there exists a plane $\Lambda$ and an affine reflexion $R^{\Lambda}_{pq}: \Rn \rightarrow \Rn$ such that
\begin{eqnarray}\label{sol}
R^{\Lambda}_{pq}(C(K,p))=C(K,q). 
\end{eqnarray} 
Then 
\begin{eqnarray}\label{angeles}
R^{\Lambda}_{pq}(\Sigma(K,p))=\Sigma(K,q),
\end{eqnarray}
\end{lemma}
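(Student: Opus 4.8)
The plan is to peel off the soft part of the statement by an affine-naturality argument and reduce everything to a claim about the two strictly convex bodies $K$ and $R:=R^{\Lambda}_{pq}(K)$.

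First I would record the elementary description of the graze: since $K\subset C(K,p)$ and $\inte K\subset\inte C(K,p)$, a point of $K$ lying on $S(K,p)=\bd C(K,p)$ lies automatically on $\bd K$, so $\Sigma(K,p)=S(K,p)\cap K$, and likewise $\Sigma(K,q)=S(K,q)\cap K$. Next, $R:=R^{\Lambda}_{pq}$ is an affine bijection, indeed an involution, carrying the pointed cone $C(K,p)$ onto the pointed cone $C(K,q)$; an affine map takes vertex to vertex, so $R(p)=q$ and $R(q)=p$, and it carries $\bd C(K,p)=S(K,p)$ onto $\bd C(K,q)=S(K,q)$. From $R\bigl(p+\lambda(y-p)\bigr)=q+\lambda\bigl(R(y)-q\bigr)$ one gets $R(C(K,p))=q+\operatorname{cone}\bigl(R(K)-q\bigr)=C(R(K),q)$, and applying $R$ once more $R(C(K,q))=C(R(K),p)$; hence $C(R(K),q)=C(K,q)$ and $C(R(K),p)=C(K,p)$. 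Also $R(K)\subset R(C(K,q))=C(K,p)$ and $R(K)\subset R(C(K,p))=C(K,q)$, so $R(K)\subset C(K,p)\cap C(K,q)$, just as $K$ does. Consequently $R(\Sigma(K,p))=R(S(K,p)\cap K)=S(K,q)\cap R(K)$, and the Lemma is \emph{equivalent} to $S(K,q)\cap R(K)=S(K,q)\cap K$, that is, to the statement that the two strictly convex bodies $K$ and $R(K)$, which now share the same support cone from $q$ (and from $p$), also share the same graze from $q$, i.e.\ $\Sigma(R(K),q)=\Sigma(K,q)$.

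The cleanest target, and the one I would aim at, is $R(K)=K$ outright. Here the special features of $R$ must be exploited, because having the same support cone from two points is by itself \emph{not} enough to force equal grazes: a homothet $q+t(K-q)$, $t\neq1$, has the same support cone from $q$ but a dilated graze. Since $R$ fixes $\Lambda$ pointwise, $K\cap\Lambda=R(K)\cap\Lambda$ and $C(K,p)\cap\Lambda=C(K,q)\cap\Lambda$; moreover $O\in(p,q)$ forces the midpoint of $p$ and $q$ (which lies in $\Lambda$) to lie in $\inte C(K,q)$, so $\Lambda$ cuts $\inte C(K,q)$ and $\Lambda\cap S(K,p)=\Lambda\cap S(K,q)$ is the relative boundary of that common section; thus $K$, $R(K)$ and the two cones already coincide along $\Lambda$. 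On each boundary ray $r$ of the common cone $C(K,q)$ the contact points $r\cap K$ and $r\cap R(K)$ are single points depending continuously on $r$ (strict convexity), and they agree for every $r$ meeting $\Lambda$. The remaining work is to propagate this to \emph{all} rays, and I expect this to be the main obstacle: either one shows that the rays of $C(K,q)$ meeting $\Lambda$ are dense among its boundary rays (which is where the precise location of $O$ on $[p,q]$ enters) and concludes by continuity, or, for a ray $r$ that escapes $\Lambda$, one replaces $(p,q)$ by another pair $(p',q')$ of $\bd S$ collinear with $O$ whose reflection hyperplane $\Lambda_{p'q'}$ does meet $r$ (the hypothesis being available for all such pairs) and repeats the argument, combining it with the symmetric identity $R(\Sigma(K,q))=\Sigma(K,p)$ that comes from running the reductions above for the pair $(q,p)$.

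Finally, once $S(K,q)\cap R(K)=S(K,q)\cap K$ is established, the chain $R(\Sigma(K,p))=S(K,q)\cap R(K)=S(K,q)\cap K=\Sigma(K,q)$ is exactly the assertion of the Lemma; and the reverse inclusion, should only one direction of the key equality be available, follows by applying the whole argument to the pair $(q,p)$, noting that the same $R$ is the affine reflection with respect to $\Lambda$ parallel to $L(q,p)=L(p,q)$ and satisfies $R(C(K,q))=C(K,p)$.
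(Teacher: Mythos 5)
The reductions in your first two paragraphs are correct and useful: $R:=R^{\Lambda}_{pq}$ is an affine involution, it sends apex to apex, so $C(R(K),q)=C(K,q)$ and $C(R(K),p)=C(K,p)$, one has $K\cap\Lambda=R(K)\cap\Lambda$, and the lemma is indeed equivalent to $S(K,q)\cap R(K)=S(K,q)\cap K$. The genuine gap is the step you then lean on: that the contact points of $K$ and $R(K)$ coincide on every boundary ray $r$ of the common cone $C(K,q)$ that merely \emph{meets} $\Lambda$. Nothing you proved gives this, and in fact the claim is circular. If $r$ meets $\Lambda$ at $z$, then $z\in S(K,q)\cap\Lambda=S(K,p)\cap\Lambda$, so $r=R(r_p)$ where $r_p$ is the boundary ray of $C(K,p)$ through $z$; writing $c=r_p\cap K$ one gets $r\cap R(K)=\{R(c)\}$, so the asserted equality $r\cap K=r\cap R(K)$ is precisely the statement $R(c)\in\Sigma(K,q)$, i.e.\ the instance of the lemma you are trying to prove at the point $c\in\Sigma(K,p)$. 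The coincidence $K\cap\Lambda=R(K)\cap\Lambda$ only settles rays whose contact point itself lies in $\Lambda$, that is, rays through $\Sigma(K,q)\cap\Lambda$ --- in $\Rt$ typically finitely many --- far too few for your continuity argument. The density question is also a red herring: since $p$ and $q$ lie on opposite sides of $\Lambda$ and the cone opens across it, most (in general all) boundary rays of $C(K,q)$ do cross $\Lambda$; crossing $\Lambda$ simply buys nothing.

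The two escape routes you sketch are not carried out and do not obviously close this gap: the first is empty for the reason just given, and the second (passing to another pair $(p',q')$ whose hyperplane meets $r$) runs into the same circularity and would in addition require comparing the reflections $R^{\Lambda}_{pq}$ and $R^{\Lambda'}_{p'q'}$, which you never do; also, your declared target $R(K)=K$ is stronger than the lemma and is nowhere established. The paper's proof uses an ingredient absent from your plan: the fixed points of $R$ on $\bd S$. For $x\in\Lambda\cap\bd S$ it takes a support plane $\Pi$ of $K$ containing the line $L(p,x)$, whose contact point $a\in\Sigma(K,p)\cap\Sigma(K,x)$ is unique by strict convexity; since $R$ fixes $x$ and maps $C(K,p)$ onto $C(K,q)$, the plane $R(\Pi)$ is again a support plane of the cone $C(K,q)$ (hence of $K$) passing through $q$ and $x$, and strict convexity is then used to identify $R(a)$ with the corresponding contact point, placing $R(a)$ in $\Sigma(K,q)$. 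Any completion of your approach needs a device of this kind --- bitangent support planes through points fixed by $R$, or some other mechanism controlling contact points away from $\Lambda$ --- because coincidence of $K$ and $R(K)$ along $\Lambda$ alone cannot do it.
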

\begin{figure}[H]
    \centering
    \includegraphics[width=.8\textwidth]{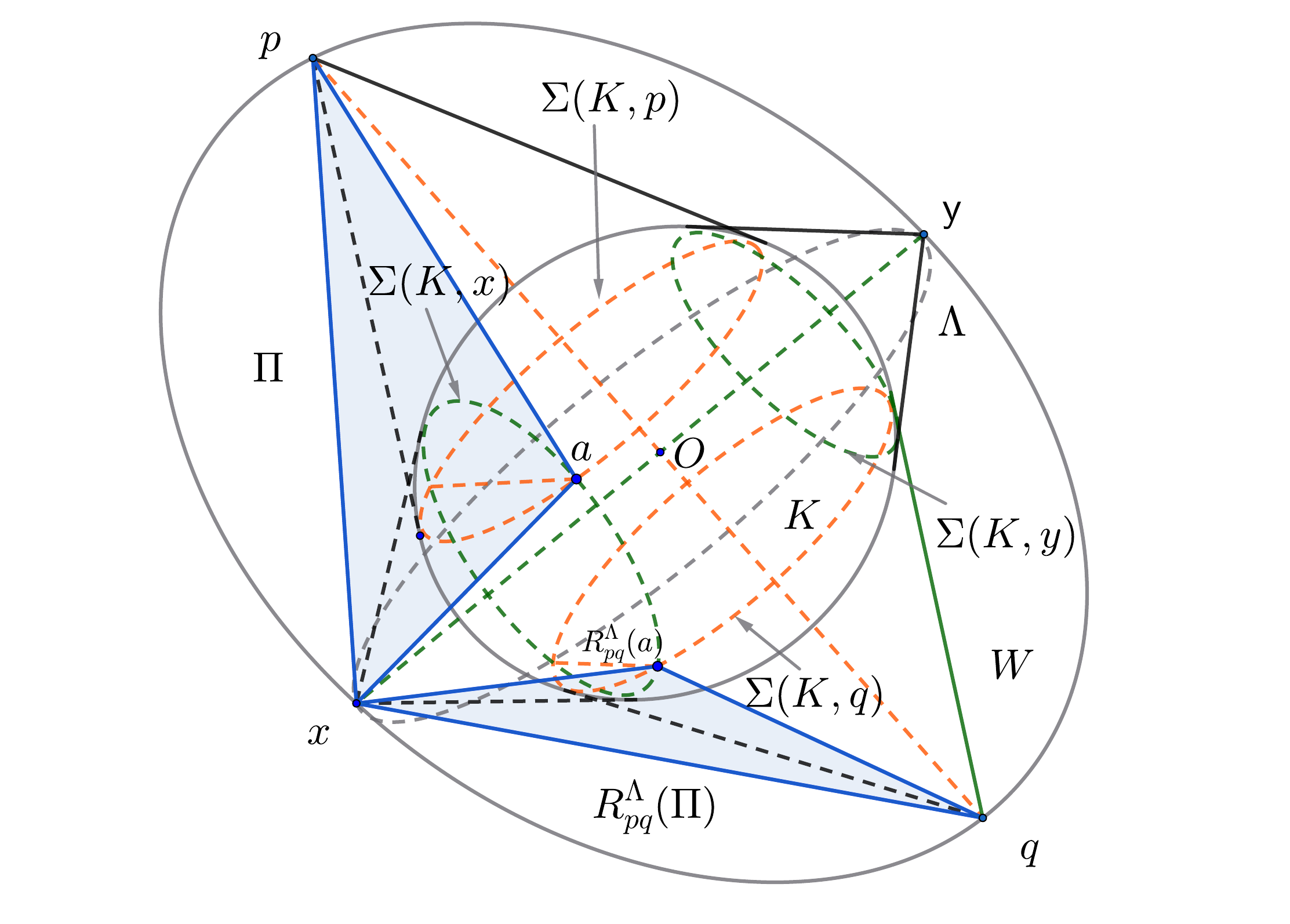}
    \caption{The relation $R^{\Lambda}_{pq}(\Sigma(K,p))=\Sigma(K,q)$ holds.}
    \label{fanta}
\end{figure}
\begin{proof}
From the relation (\ref{sol}) is ease to see that $S$ has center at $O$.
Let $x,y\in \Lambda \cap \bd W$ with $O\in L(x,y)$, let $\Pi$ be a support plane of $K$ containing the line $L(p,x)$ and let $a\in \bd K \cap \Pi$. Notice that $\Pi$ is support plane of $C(K,p)$ and $C(K,x)$. From 
(\ref{sol}) it follows that $R^{\Lambda}_{pq}(\Pi)$ is support plane of $C(K,q)$. On the other hand, since $x\in \Lambda \cap \bd S$, the plane $R^{\Lambda}_{pq}(\Pi)$ is support plane of $C(K,x)$ (see Fig. \ref{fanta}). Thus $R^{\Lambda}_{pq}(a)\in \Sigma(K,x))\cap \Sigma(K,q)$, i.e., $R^{\Lambda}_{pq}(a)\in \Sigma(K,q)$. 
\end{proof}
With the notation above we present the following lemma.
\begin{lemma}\label{muneca}
Let $x,y\in \Lambda \cap \bd S$ with $O\in L(x,y)$ and let $R^H_{xy}: \Rn \rightarrow \Rn$ be the affine reflexion, with respect to the hyperplane $H$, $O\in H$, and parallel to $L(x,y)$, such that 
\begin{eqnarray}\label{tamarindo}
R^{H}_{xy}(C(K,x))=C(K,y),
\end{eqnarray}
holds. Then the line $L(p,q)$ is contained in $H$.
\end{lemma}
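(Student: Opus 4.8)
The plan is to \emph{conjugate} the affine reflexion $R^{H}_{xy}$ by $\sigma:=R^{\Lambda}_{pq}$ and then to use that an affine reflexion carrying one support cone onto another, in a prescribed direction, is unique.

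First I would record some reductions. Since $R^{\Lambda}_{pq}$ maps the pointed cone $C(K,p)$ onto the pointed cone $C(K,q)$ it carries vertex to vertex, so $R^{\Lambda}_{pq}(p)=q$, and likewise $R^{H}_{xy}(x)=y$. By Lemma \ref{porton} (as already noted in the proof of Lemma \ref{lanza}, relation (\ref{sol}) forces $O$ to be the centre of $S$), $S$ is $O$-symmetric, hence $q=-p$ and $y=-x$; and since an affine reflexion parallel to $L(u,v)$ that swaps $u$ and $v$ fixes the midpoint of $[u,v]$, both $\Lambda$ and $H$ contain $O$ and so are linear hyperplanes. As $x,y\in\Lambda$, $\sigma$ fixes $x$ and $y$, hence fixes $L(x,y)$ pointwise; and $R^{H}_{xy}(x)=-x\neq x$ gives $x\notin H$, so $H\neq\Lambda$.

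Because $\sigma$ fixes $L(x,y)$ pointwise, the conjugate $\sigma R^{H}_{xy}\sigma$ is again an affine reflexion parallel to $L(x,y)$, with mirror $\sigma(H)$; write it $R^{\sigma(H)}_{xy}$. Suppose for the moment we knew
\begin{equation}\label{eq:keyclaim}
\sigma\big(C(K,x)\big)=C(K,x)\quad\text{and}\quad\sigma\big(C(K,y)\big)=C(K,y).
\end{equation}
Applying $\sigma$ to $R^{H}_{xy}(C(K,x))=C(K,y)$ and using (\ref{eq:keyclaim}),
\[
R^{\sigma(H)}_{xy}\big(C(K,x)\big)=\sigma R^{H}_{xy}\sigma\big(C(K,x)\big)=\sigma R^{H}_{xy}\big(C(K,x)\big)=\sigma\big(C(K,y)\big)=C(K,y).
\]
Thus $R^{\sigma(H)}_{xy}$ and $R^{H}_{xy}$ are affine reflexions with the same direction $L(x,y)$, both sending $C(K,x)$ to $C(K,y)$; I claim such a reflexion is unique. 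If $R^{H_1}_{xy}$ and $R^{H_2}_{xy}$ both do it, then $\rho:=R^{H_1}_{xy}R^{H_2}_{xy}$ fixes $O$ and the vertex $x$ and satisfies $\rho(C(K,x))=C(K,x)$; being linear (it fixes $O$), $\rho=L_1L_2$ fixes every vector of $H_1\cap H_2$ and fixes the direction $v$ of $L(x,y)$ (each $L_i$ sends $v\mapsto -v$), hence fixes a subspace of dimension $\ge n-1$, while $\det\rho=(-1)(-1)=1$; so $\rho=\mathrm{id}+N$ with $N$ nilpotent of rank $\le 1$. Since $\rho$ preserves the translated cone $C(K,x)-x$, which contains no line (otherwise $x\in K$), iterating $\rho^{k}=\mathrm{id}+kN$ forces $N=0$, so $\rho=\mathrm{id}$ and $H_1=H_2$. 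Hence $\sigma(H)=H$. Finally, writing $\Lambda=\ker f$ with $f(p)=1$, so that $\sigma(z)=z-2f(z)p$, a linear hyperplane $H=\ker g$ with $\sigma(H)=H$ must satisfy $f(z)g(p)=0$ for all $z\in H$; since $H\neq\ker f$ this forces $g(p)=0$, i.e. $p\in H$, whence (as $O\in H$) $L(p,q)=L(p,-p)\subset H$, the desired conclusion.

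The whole argument therefore rests on the key claim (\ref{eq:keyclaim}), and that is the hard part. Since $\sigma$ fixes $x$ and $y$, (\ref{eq:keyclaim}) amounts to $C(\sigma(K),x)=C(K,x)$ and $C(\sigma(K),y)=C(K,y)$, and it follows at once from $\sigma(K)=K$. To obtain the latter one can use $K=\bigcap_{z\in\bd S}C(K,z)$ — for $w\notin K$ a hyperplane separating $w$ from $K$ produces a $z\in\bd S$ whose ray through $w$ misses $K$, so $w\notin C(K,z)$ — which reduces $\sigma(K)=K$ to $C(\sigma(K),z)=C(K,z)$ for every $z\in\bd S$. For $z=\pm p$ this is immediate from $\sigma(C(K,p))=C(K,-p)$. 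For the remaining $z$ the plan is to feed in, for each admissible pair, the mechanism inside the proof of Lemma \ref{lanza}: $\sigma$ carries each support hyperplane of $K$ through $L(p,z)$, together with its (by strict convexity, unique) contact point, onto a support hyperplane of $K$ through $L(-p,z)$ and its contact point, while Lemma \ref{lanza} gives $\sigma(\Sigma(K,p))=\Sigma(K,-p)$; tracking these contact points through the grazes and using $\bd K=\bigcup_{z\in\bd S}\Sigma(K,z)$ should identify $\sigma$ with a self-map of $\bd K$. I expect the globalisation of this bookkeeping — passing from control of $\sigma$ on $\Sigma(K,\pm p)$ and on the finite sets $\Sigma(K,z)\cap(\Sigma(K,p)\cup\Sigma(K,-p))$ to control on all of $\bd K$ — to be the main obstacle. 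For $n=3$ an alternative is to restrict everything to the plane $P:=\aff\{O,p,x\}$, which both $\sigma$ and $R^{H}_{xy}$ preserve and on which $C(K,z)\cap P=C(K\cap P,z)$ for $z\in\{\pm p,\pm x\}$: the uniqueness step becomes elementary planar geometry and (\ref{eq:keyclaim}) reduces to the invariance of the planar wedge $C(K\cap P,x)$ under $\sigma|_P$, which is still the crux.
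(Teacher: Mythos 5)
Your argument has a genuine gap, and it sits exactly where you flag it: the ``key claim'' $\sigma(C(K,x))=C(K,x)$ and $\sigma(C(K,y))=C(K,y)$ (equivalently, via your reduction $K=\bigcap_{z\in\bd S}C(K,z)$, the statement $\sigma(K)=K$ for $\sigma=R^{\Lambda}_{pq}$) is never proved, and nothing available at this stage of the paper yields it. Lemma \ref{lanza} only controls $\sigma$ on a very thin part of the cone $C(K,x)$: its proof moves the (two, in dimension $3$) support planes of $K$ through the line $L(p,x)$ onto support planes of $C(K,x)$, and transports the corresponding graze points; it says nothing about the rest of $\bd C(K,x)$. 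Asserting $\sigma(K)=K$ for every such affine reflexion is an affine-symmetry statement about $K$ that is essentially as strong as the conclusions the whole chain of lemmas is driving toward -- in the paper even central symmetry of $K$ is only obtained at the very end of the proof of Theorem \ref{great}, via the characterization of central symmetry from \cite{efrenconos}, and invariance of $K$ under these reflexions is only clear once $K$ is known to be an ellipsoid. So your scheme proves the lemma from a premise that is at least as hard as, and not derivable from, what is in hand. The conditional parts of your argument are fine: the conjugation $\sigma R^{H}_{xy}\sigma=R^{\sigma(H)}_{xy}$, the uniqueness of an affine reflexion in a prescribed direction carrying the pointed cone $C(K,x)$ onto $C(K,y)$ (your unipotent argument is correct because $C(K,x)-x$ contains no line), and the final computation that $\sigma(H)=H$ with $H\neq\Lambda$ forces $p\in H$. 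But all of this hangs on the unproven invariance, so the proof is incomplete.

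For contrast, the paper avoids any global statement about $K$ by a direct incidence argument using only the graze relation of Lemma \ref{lanza}: pick $a\in\Sigma(K,p)\cap\Sigma(K,x)$, form the four graze points $a$, $R^{\Lambda}_{pq}(a)$, $R^{H}_{xy}(a)$, $R^{\Lambda}_{pq}(R^{H}_{xy}(a))$, and note that the segments joining them in pairs are parallel to $L(p,q)$ because $\sigma$ is a reflexion parallel to $L(p,q)$. The planes $D_x=\aff\{x,a,R^{\Lambda}_{pq}(a)\}$ and $D_y=\aff\{y,R^{H}_{xy}(a),R^{\Lambda}_{pq}(R^{H}_{xy}(a))\}$ then meet in a line parallel to $L(p,q)$, and this line contains two points of $C(K,x)\cap C(K,y)=H\cap C(K,x)$, hence lies in $H$; since $O\in H$, the hyperplane $H$ contains both $O$ and the direction of $L(p,q)$, so $L(p,q)\subset H$. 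If you want to salvage your route, you would have to either prove $\sigma(K)=K$ (which I do not see how to do before the main theorems) or replace the key claim by a localized statement of this incidence type.
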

\begin{proof}
Let $a\in \Sigma(K,p)\cap \Sigma(K,x)$. Notice that by Lemma \ref{lanza} 
\[
R^{\Lambda}_{pq}(a)\in \Sigma(K,q),  R^{H}_{xy}(a)\in \Sigma(K,p)\cap \Sigma(K,y), R^{\Lambda}_{pq}(R^{H}_{xy}(a))\in \Sigma(K,q)
\]
and the lines $L(a, R^{\Lambda}_{pq}(a))$, 
$L(R^{H}_{xy}(a), R^{\Lambda}_{pq}(R^{H}_{xy}(a)))$ are parallel to 
$L(p,q)$ (see Fig. \ref{fanta}). Thus, if we denote by $D_x$, $D_y$ the planes defined by $x,a, R^{\Lambda}_{pq}(a)$  and $y, R^{H}_{xy}(a), R^{\Lambda}_{pq}(R^{H}_{xy}(a))$, respectively, it follows that 
$D_x \cap D_y$ is parallel to $L(p,q)$.

On the other hand, we observe that
\[
L(x,a)\cap L(y,R^{H}_{xy}(a))\in D_x \cap D_y \textrm{   }\textrm{   } \textrm{and} \textrm{  }\textrm{   }
L(x, R^{\Lambda}_{pq}(a))\cap L(y,R^{\Lambda}_{pq}(R^{H}_{xy}(a)))\in D_x \cap D_y
\]
and 
\[
L(x,a)\cap L(y,R^{H}_{xy}(a))\in G_{xy}
\textrm{   }\textrm{   }
\textrm{and} 
\textrm{   }\textrm{   }
L(x, R^{\Lambda}_{pq}(a))\cap R^{\Lambda}_{pq}(R^{H}_{xy}(a))\in G_{xy},
\]
where $G_{xy}:=C(K,x)\cap C(K,y)=H\cap C(K,x)=H\cap C(K,y)$. Hence we conclude that the plane $H$ is the plane defined by $O$ and $D_x \cap D_y$. Consequently $L(p,q)\subset H$.
 \end{proof}
\begin{lemma}\label{silvestre} 
The convex bodies $K$ has the strong intersection property 
relative to the point $O$ and the convex body $G$ and with associated body $S$. 
\end{lemma}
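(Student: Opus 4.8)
The plan is to verify the definition of the strong intersection property with the roles of $S$ and $G$ interchanged. Since $O\in\inte G$, for each $x'\in\bd G$ the line $L(O,x')$ meets $\bd G$ in a unique further point $y'$, so what must be exhibited is, for every $x'\in\bd G$, a hyperplane $\Pi'(x')$, $O\in\Pi'(x')$, with
\[
S(K,x')\cap S(K,y')=\Pi'(x')\cap\bd S.
\]
Here $S$ is an admissible associated body: it is strictly convex by Lemma \ref{ensueno} and centrally symmetric by Lemma \ref{porton}, so for $x\in\bd S$ the partner is $-x$; I write $\Pi_x$ for the hyperplane through $O$ with $S(K,x)\cap S(K,-x)=\Pi_x\cap\bd G$, $G_x:=\Pi_x\cap G$, and $\Gamma_x$ for the support plane of $S$ at $x$ (parallel to $\Pi_x$, by Lemma \ref{porton}).

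The first step is to upgrade the hypothesis to an identity of cone cross-sections: $C(K,x)\cap\Pi_x=C(K,-x)\cap\Pi_x=G_x$ for every $x\in\bd S$. Indeed $\Pi_x\cap\bd G=S(K,x)\cap S(K,-x)$ is contained in $S(K,x)\cap\Pi_x$, and $\Pi_x\cap\bd G$, $S(K,x)\cap\Pi_x$ are the relative boundaries, within $\Pi_x$, of the convex bodies $G_x$ and $C(K,x)\cap\Pi_x$; since a compact boundary hypersurface contained in another of the same dimension must coincide with it, these bodies agree (and likewise on the $-x$ side). Hence $C(K,x)$ and $C(K,-x)$ are the two cones over the common base $G_x\subset\Pi_x$ with apices $x$ and $-x$, so the affine reflexion $R^{\Pi_x}_{x,-x}$ — which fixes $\Pi_x$ pointwise and swaps $x\leftrightarrow-x$ because $O\in\Pi_x$ bisects $[x,-x]$ — carries $C(K,x)$ onto $C(K,-x)$, and by Lemma \ref{lanza} it carries $\Sigma(K,x)$ onto $\Sigma(K,-x)$. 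Thus the hypothesis of Lemma \ref{muneca} holds for every antipodal pair of $\bd S$, and applying it to the pair $\{p,-p\}$ (with $\Lambda=\Pi_p$) and to a pair $\{x,-x\}\subset\Pi_p$ yields the reciprocity $x\in\Pi_p\iff p\in\Pi_x$ for $x,p\in\bd S$; equivalently $\langle x,N(p)\rangle=0\iff\langle p,N(x)\rangle=0$, where $N\colon\bd S\to\Sn$ denotes the Gauss map of $S$.

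Now fix $x'\in\bd G$ and put $M(x'):=\{x\in\bd S:x'\in\Pi_x\}=N^{-1}\big((x')^{\perp}\cap\Sn\big)$. Surjectivity of $N$ gives $M(x')\neq\emptyset$; and for $x\in M(x')$ we have $x'\in\Pi_x\cap\bd G=\bd G_x=S(K,x)\cap S(K,-x)$, so $L(x,x')$ is a generator of $C(K,x)$, hence a support line of $K$, and so is its reflexion $L(-x,x')=R^{\Pi_x}_{x,-x}(L(x,x'))$; reading these from $x'$ gives $x,-x\in S(K,x')$. Therefore $M(x')\subset S(K,x')\cap S(K,-x')$, and $M(x')=-M(x')$ since $\Pi_x=\Pi_{-x}$. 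The essential point — and the step I expect to be the main obstacle — is that $M(x')$ is a full hyperplane section $\Pi'(x')\cap\bd S$ of $\bd S$, with $\Pi'(x')$ the hyperplane through $O$ parallel to a support plane of $G$ at $x'$, and that in fact $S(K,x')\cap S(K,y')=M(x')$. One inclusion, $M(x')\subset S(K,x')\cap S(K,y')$, is the observation just made (using $M(x')=-M(x')$ and the central symmetry of $S$ to obtain the $y'$ cone). The flatness of $M(x')$ and the reverse inclusion require propagating the reciprocity from pairs of points of $\bd S$ to the point $x'\in\bd G$ — that is, the cross-reciprocity $\langle x,N_G(x')\rangle=0\iff\langle x',N(x)\rangle=0$ for $x\in\bd S$, $x'\in\bd G$ — which is to be carried out with the support-line/cross-section bookkeeping supplied by Lemmas \ref{lanza}--\ref{muneca} together with the central symmetry and strict convexity of $S$.

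Granting that, the argument closes by repeating the first step in the exchanged configuration: $\Pi'(x')\cap\bd S=M(x')\subset S(K,x')$, and this set together with the relative boundary $S(K,x')\cap\Pi'(x')$ are $(n-2)$-spheres, so they coincide, forcing $C(K,x')\cap\Pi'(x')=\Pi'(x')\cap S$ (the cross-section is bounded because $\Pi'(x')$ is parallel to a support plane of $G$ and $K\subset\inte G$); the same holds with $y'$ in place of $x'$. Hence $C(K,x')$ and $C(K,y')$ are the two cones over the common base $\Pi'(x')\cap S$, $R^{\Pi'(x')}_{x',y'}(C(K,x'))=C(K,y')$, and $S(K,x')\cap S(K,y')=\Pi'(x')\cap\bd S$. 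This shows that $K$ has the strong intersection property relative to $O$ and $G$ with associated body $S$.
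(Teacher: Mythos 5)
Your proposal does not actually prove the lemma: the step you yourself flag as ``the main obstacle'' --- that $M(x'):=\{x\in\bd S: x'\in\Pi_x\}$ is a full central hyperplane section of $\bd S$ and that $S(K,x')\cap S(K,y')$ is contained in it (your cross-reciprocity between $\bd S$ and $\bd G$) --- is precisely the content of the lemma, and you leave it as ``to be carried out'' and then proceed under ``Granting that''. Everything you do prove is the easy periphery: the upgrade from $S(K,x)\cap S(K,-x)=\Pi_x\cap\bd G$ to $R^{\Pi_x}_{x,-x}(C(K,x))=C(K,-x)$ (which the paper simply asserts as an interpretation of the hypothesis, so your careful justification is welcome), the reciprocity $x\in\Pi_p\Leftrightarrow p\in\Pi_x$ for antipodal pairs on $\bd S$ via Lemma \ref{muneca}, the inclusion $M(x')\subset S(K,x')\cap S(K,y')$ (note: the partner is $y'=L(x',O)\cap\bd G$, not $-x'$, since at this stage $G$ is not known to be symmetric), and the closing bicone argument. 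Without the flatness of $M(x')$ and the reverse inclusion, no hyperplane $\Pi'(x')$ has been produced, so the strong intersection property relative to $O$ and $G$ has not been verified. A secondary issue: your identification $M(x')=N^{-1}\bigl((x')^{\perp}\cap\Sn\bigr)$ uses a Gauss map of $\bd S$, i.e.\ uniqueness of the outer normal, which neither the hypotheses nor Lemma \ref{ensueno} (strict convexity) provide.

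For comparison, the paper fills exactly this hole by a direct construction rather than by a reciprocity statement: fix $u\in\bd G$, set $v:=L(u,O)\cap\bd G$ and $\{x,-x\}:=L(u,v)\cap\bd S$, and for each plane $\Gamma$ containing $L(u,v)$ take the two supporting lines of $K\cap\Gamma$ through $u$; they meet $\bd S$ at points $p,q$, the strong intersection hypothesis applied to $p$ (whose partner is forced to be $q$) gives a plane $\Lambda\supset L(u,v)$ with $S(K,p)\cap S(K,q)=\Lambda\cap\bd G$, and Lemma \ref{muneca} then yields $L(p,q)\subset H$, where $H$ is the plane with $S(K,x)\cap S(K,-x)=H\cap\bd G$. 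Rotating $\Gamma$ about $L(u,v)$ sweeps out $S(K,u)\cap\bd S$ (and the corresponding set for $v$) inside $H\cap\bd S$ and gives the required equality with $W=H$. So the cross-reciprocity you need is obtained pointwise, in planes through $L(u,O)$, from the very lemmas you cite; but carrying out that bookkeeping is the proof, not a detail one may grant.
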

\begin{proof}
Let $u\in \bd G$, we are going to prove that there exists a a point $v\in G$ and a plane $W$, $O\in W$, such that
\begin{eqnarray}\label{olorosa}
C(K,u)\cap C(K,v)=W\cap S.
\end{eqnarray} 
By Lemma \ref{porton}, $S$ is centrally symmetric. Since for every $x\in S$, there exists  a plane $H$, $O\in H $ such that 
\[
C(K,x)\cap C(K,-x)=H\cap G
\]
we can interprete this as there exists an affine reflexion $R^H_{x(-x)}: \Rn \rightarrow \Rn$ with respect to the hyperplane $H$, $O\in H$, and a direction parallel to $L(x,-x)$, such that 
\[
R^{H}_{x(-x)}(C(K,x))=C(K,-x).
\]
Thus we are in conditions to apply Lemmas \ref{lanza} and \ref{muneca}.

Let $v:= L(u,O) \cap \bd G$, $v\not=u$ and let $\{x, -x\} :=L(u,v) \cap \bd S$. Let $\Gamma$ be a plane containing the line $L(u,v)$. We denote by $R_1, R_2\subset \Gamma$ the rays emanating from $u$ which are contained in the supporting lines $L_1$, $L_2$ of $\Gamma \cap K$ passing through $u$ and let $p:=R_1\cap \bd S$ and $q:=R_2\cap \bd S$ (notice that here we use the condition $G\subset \inte S$). By virtue of the hypothesis, there exists a plane $\Lambda$ such that the relation
\[
C(K,p)\cap C(K,q)=\Lambda \cap G
\] 
holds. By our choice of $u$ and $v$ and since $O\in \Lambda$ it is clear that $L(u,v)\subset \Lambda$. By Lemma \ref{muneca}, $L(p,q)\subset H$, where $H$ is the plane such that
\[
C(K,x)\cap C(K,-x)=H \cap G.
\] 
Varying $\Gamma$, assuming that $L(u,v)\subset \Gamma$, we obtain that relation (\ref{olorosa}) holds if we define $W=H$, i.e., $H$ is the plane that we were looking for.
\end{proof}

\textbf{Proof of Theorem \ref{great}}. By Lemma \ref{silvestre}, the convex bodies $K$ has the strong intersection property relative to the point $O$ and the convex body $G$ and with associated body $S$. On the other hand, by the Lemma \ref{ensueno}, the body $S$ is strictly convex. Thus, by the Lemma \ref{porton} applied to the bodies $K$ and $G$, $G$ is centrally symmetric. Then, by virtue that, for $x\in S$, there exists a plane $H$, $O\in H$, such that $C(K,x)\cap C(K,-x)=H \cap G$, the cones 
$C(K,x)$ and  $C(K,-x)$ differ by a central symmetry. Thus, by the characterization of central symmetry of \cite{efrenconos}, the body $K$ is centrally symmetric. Hence the bodies $K$, $S$ and $G$ are centrally symmetric and concentric.

\section{Proof of Theorem \ref{chiquita} for dimension 3.} 
In the proof of the Theorem \ref{chiquita} we will assume that $O$ is the origin of a system of coordinates. The proof is organized as following:
\begin{itemize}
\item [a)] 1. We assume that $K$ is an ellipsoid with center at $O$ and we prove that $G$ is an ellipsoid concentric with $K$, 2. We prove that $K$ and $G$ are homothetic, 3. We prove that $S$ is an ellipsoid with center at $O$ and homothetic to $K$.

\item [b)] 1. We suppose that $G$ is an ellipsoid with center at $O$ y we prove that $K$ is an ellipsoid concentric with $G$, 2. Using 2 and 3 from a) we conclude $K$, $S$ and $G$ are ellipsoids homothetic and concentric. 

\item [c)] 1. We suppose that $S$ is an ellipsoid and we prove that $K$ is an ellipsoid, 2. Using 1 and 2 from a) we conclude that $K$, $S$ and $G$ are ellipsoids homothetic and concentric. 
 
\end{itemize}
Let $C\subset \Rn $ be a convex cone, the cone is said to be \textit{ellipsoidal} if there is a hyperplane $\Pi$ such that $\Pi \cap C$ is an ellipsoid. In the proof of Theorem \ref{chiquita} we will need the following result which was proven in 
\cite{egj} (which can be seen as a particular case of Theorem 2 of \cite{bigru}).
\begin{itemize} 
\item [[MMJ]] \textit{Let $K,G\subset \Rn$ be convex bodies, $n\geq 3$. Suppose that $K\subset \inte G$, $K$ is $O$-symmetric and, for every $x\in \bd G$, the cone $C(K,x)$ is ellipsoidal. Then $K$ is an ellipsoid.}
\end{itemize} 
 
We recall that the we denote, for every $z \in \bd M$, by $\Pi_z$ the plane such that $\Pi_z\cap G=S(K,z)\cap S(K,-z)$, by $G_z$ the section $\Pi_z\cap G$ and by $\Gamma_z$ the plane trough $z$ parallel to $\Pi_z$.  

\textbf{a) 1.} We suppose that $K$ is an ellipsoid. In order to prove that $G$ is an ellipsoid, we are going to prove that all the sections of $G$ passing through $O$ are ellipses. Thus, by Theorem 16.12 in \cite{bu}, it will be deducted that $G$ is an ellipsoid. Let $\Pi$ be a plane through $O$. By a continuity argument, it follows that there exists $z\in \bd S$ such that $\Pi=\Pi_z$. On the other hand, by Lemma \ref{porton}, $S$ is $O$-symmetric. Thus 
$-z$ belongs to $S$. Since $K$ is an ellipsoid the cones  $S(K,z)$, $S(K,-z)$ are ellipsoidal. By the relation $G_z=S(K,z)\cap S(K,-z)$ given by the strong intersection property, it follows that $G_z$ is an ellipse. Thus $G$ is an ellipsoid.

\textbf{a) 2.} Now we are going to demonstrate that $K$ and $G$ are homothetics. In order to do this we will prove that for every plane $\Pi$, $O\in \Pi$, the sections $\Pi \cap K$ and $\Pi\cap G$ are homothetic. Let $\Pi$ be a plane, $O\in \Pi$. Let $z\in \bd S$ such that $\Pi=\Pi_z$. The section $\Delta_z \cap K$ is an ellipse with center at the line $L(O,z)$ and the section $G_z$ has center at $O$. Since $\Delta_z \cap K$ and $G_z$ are sections of the cone $S(K,z)$ it follows that $\Pi_z$ and $\Delta_z$ are parallel. Thus 
$\Delta_z \cap K$ and $G_z$ are homothetic. On the other hand, by virtue that all the parallel section of $K$ are homothetic,  the section $\Delta_z \cap K$ and $\Pi \cap K$ are homothetic. Hence $\Pi \cap K$ and $G_z:=\Pi_z\cap G=\Pi \cap G$ are homothetic.

By a theorem of A. Rogers proved in \cite{Rogers}, $K$ and $G$ are homothetic.

\textbf{a) 3.} Let $A:\mathbb{R}^3 \rightarrow \mathbb{R}^3$ be an affine transformation such that $A(K)$ and $A(G)$ are two concentric spheres. Hence $A(S)$ is the locus of the vertices of right circular cones, where $A(K)$ is inscribed, which are congruent. Consequently $A(S)$ is a sphere with center at $A(O)$.  
 
\textbf{b) 1.}  We assume that $G$ is an ellipsoid.  In order to demonstrate that $K$ is an ellipsoid we are going to prove that, for each $x\in \bd S$, the cone $S(K,x)$ is ellipsoidal and, then, we will apply Theorem [MMJ] to conclude that $K$  is an ellipsoid (Notice that, by Theorem \ref{great}, $K$ is $O$-symmetric). Let $x\in \bd S$. By Lemma \ref{porton}, $S$ is $O$-symmetric. Thus 
$-x$ belongs to $S$. By hypothesis there exists a plane $\Pi_x$, $O\in \Pi_x$, such that the intersection $S(K,x)\cap S(K,-x)$ is equal to $\Pi_x\cap G$. By virtue that $G$ is an ellipsoid, the section $\Pi_x\cap G$ is an ellipse. Thus $S(K,x)$ is ellipsoidal.

\textbf{c) 1}. We assume that $S$ is an ellipsoid. In order to demonstrate that $K$ is an ellipsoid we are going to prove that, for each $x\in \bd G$, the cone $S(K,x)$ is ellipsoidal and, then, we will apply Theorem [MMJ] to conclude that $K$  is an ellipsoid (Notice that, by Theorem \ref{great}, $K$ is $O$-symmetric). The next lemma will be used in the proof that, for $x\in \bd G$, the cone $S(K,x)$ is ellipsoidal.

For $u\in \mathbb{S}^{2}$, we consider the line $L(u):=\{\lambda u: \lambda \in \mathbb{R}\}$ and the set $\Omega_u:=\{z\in \bd S:L(u)\subset \Pi_z\}$.

\begin{lemma}\label{kiss}
For $u\in \Sd$, the relation
\begin{eqnarray}\label{santana}
\Omega_u=S\partial (S,u) 
\end{eqnarray} 
holds
\end{lemma}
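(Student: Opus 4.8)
The plan is to collapse both sides of the asserted equality to the single condition ``the line $z+L(u)$ is contained in the supporting plane of $S$ at $z$'', and then read off the equality. I would first recall the structural facts already available. Since in the case under consideration $S$ is an ellipsoid, it is smooth and strictly convex, so at each $z\in\bd S$ there is a \emph{unique} supporting plane, call it $T_z$; by Lemma~\ref{porton} the plane $\Gamma_z$ (through $z$ and parallel to $\Pi_z$) supports $S$, hence $\Gamma_z=T_z$ and therefore $\Pi_z$ is exactly the plane through $O$ parallel to $T_z$. Because $L(u)=\{\lambda u:\lambda\in\R\}$ passes through $O$ and $\Pi_z$ passes through $O$, the membership $L(u)\subset\Pi_z$ defining $\Omega_u$ is equivalent to saying that $u$ is a direction of $\Pi_z$, i.e.\ of $T_z$, i.e.\ $z+L(u)\subset T_z$. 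On the other side, I take $S\partial(S,u)=\{z\in\bd S:(z+L(u))\cap\inte S=\emptyset\}$ (the shadow boundary of $S$ in direction $u$).

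With this reformulation the two inclusions are short. If $z\in\Omega_u$, then $z+L(u)\subset T_z$, and since $T_z$ supports $S$ the line $z+L(u)$ misses $\inte S$, so $z\in S\partial(S,u)$. Conversely, if $z\in S\partial(S,u)$, the line $z+L(u)$ is disjoint from the convex open set $\inte S$; separating them by a plane $H$, the point $z$ lies in the closures of both sides and hence $z\in H$, while the separated object $z+L(u)$ is a full line and so must lie entirely in $H$. Thus $H$ is a supporting plane of $S$ at $z$ containing $z+L(u)$; by smoothness of $S$ we get $H=T_z$, whence $z+L(u)\subset T_z$, i.e.\ $z\in\Omega_u$. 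Combining, $\Omega_u=S\partial(S,u)$.

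The one genuinely delicate point is the use of smoothness of $S$ in the converse inclusion: without uniqueness of the supporting plane at $z$ the plane $H$ furnished by separation need not be $\Gamma_z$, and the implication $z\in S\partial(S,u)\Rightarrow L(u)\subset\Pi_z$ could fail; this is precisely why the lemma is placed inside the part of the argument where $S$ has already been shown to be an ellipsoid (if one wanted it in greater generality one would first need to deduce smoothness of $S$ from the strong intersection property). A minor auxiliary point, also worth spelling out, is the separation step itself, where one uses that a \emph{line} weakly separated from $\inte S$ and meeting $\bd S$ is forced to lie in the separating plane — a ray or segment would not be.
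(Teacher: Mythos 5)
Your proof is correct in the context where the lemma sits (part c) of the proof of Theorem \ref{chiquita}, where $S$ is assumed to be an ellipsoid), but the converse inclusion follows a genuinely different route from the paper's. You collapse both conditions to ``$z+L(u)$ lies in the unique tangent plane of $S$ at $z$'': the inclusion $\Omega_u\subset S\partial(S,u)$ is the same as in the paper (via Lemma \ref{porton}, the plane $\Gamma_z$ supports $S$ and is parallel to $L(u)$), but for $S\partial(S,u)\subset\Omega_u$ you use smoothness of the ellipsoid $S$ to identify the separating plane containing $z+L(u)$ with $\Gamma_z$ and hence conclude $L(u)\subset\Pi_z$. The paper never invokes smoothness: it takes the plane $\Pi$ through $O$ parallel to a supporting plane $\Gamma$ of $S$ at $z$ containing the direction $u$, produces (by the continuity argument already used in part a)1) a point $\bar z\in\Omega_u$ with $\Pi_{\bar z}=\Pi$ lying on the same side of $\Pi$ as $z$, identifies $\Gamma_{\bar z}=\Gamma$ because two parallel supporting planes on the same side of a convex body coincide, and then uses strict convexity of $S$ (Lemma \ref{ensueno}) to force $z=\bar z$. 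Your version is shorter and bypasses the surjectivity of $z\mapsto\Pi_z$, which the paper justifies only by continuity, while the paper's version is more general: it works for the merely strictly convex $S$ furnished by Lemmas \ref{porton} and \ref{ensueno}, so your closing remark that one would ``first need to deduce smoothness of $S$'' to get the lemma in greater generality overstates the obstruction.
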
 
\begin{proof}
Let $u\in \Sd$. Let $z\in \Omega_u$. By Lemma \ref{porton}, the plane $\Gamma_z$ is a supporting plane of $M$ and it is parallel to the line $L(u)$. Thus $z\in S\partial (S,u)$. Hence 
$\Omega_u\subset S\partial (S,u)$. Now let $z\in S\partial (S,u)$. Then there exists a plane $\Gamma$ such that $z\in \Gamma$ and $\Gamma$ is parallel to $u$. Let $\Pi$ be a plane parallel to $\Gamma$ and passing through $O$. Let $\bar{z}\in \Omega_u$ such that $\Pi_z=\Pi$ and $\bar{z}$ is in the same half-space determined by $\Pi$ where is $z$. By Lemma \ref{porton}, the plane $\Gamma_{\bar{z}}$ is a supporting plane of $M$ and it is parallel to $L(u)$. Thus $\Gamma=\Gamma_{\bar{z}}$. By virtue of the strictly convexity of $S$, Lemma \ref{ensueno}, it follows that 
$z=\bar{z}$. Hence $z\in \Omega_u$, i.e., $S\partial (S,u) \subset \Omega_u$. Therefore
$\Omega_u=S\partial (S,u)$.  
\end{proof}
Now we are going to prove that, for $x\in \bd G$, the cone $S(K,x)$ is ellipsoidal. Let $x\in \bd G$. Let $u\in \Sd$ and $L(u) $ be  such that $x\in L(u)$. We claim that, for $y\in S\partial(S,u)$, the line $L(x,y)$ is supporting line of $K$. If $y\in S\partial(S,u)$, by the definition of $\Omega_u$ and (\ref{santana}) of Lemma \ref{kiss}, then $L(u)\subset \Pi_y$. Given that $x\in L(u)\cap \bd G$, it follows that $x\in G_y$. Furthermore, since the relation $G_y=S(K,y)\cap S(K,-y)$ holds, we deduce that $L(x,y)$ is supporting line of $K$. Therefore the cone $S(K,x)$ can be represented as
\[
S(K,x)=\bigcup_{y\in S\partial(S,u)} L(x,y).
\]
Since $S$ is an ellipsoid, the set $S\partial(S,u)$ is an ellipse. Thus $S(K,u)$ is an ellipsoidal cone. Thus $K$ is an ellipsoid.

\section{Proof of Theorem \ref{valor} for dimension 3.} 
By Theorem \ref{chiquita} is enough to prove that $S$ is an ellipsoid. In order to prove that $S$ is an ellipsoid we will apply Kakutani's Theorem \cite{Kakutani}: \textit{if for every hyperplane $\Lambda$, passing through a fix point $O\in \inte K$, there exist a line $L_{\Lambda}$ such that 
\[
\Lambda \cap K\subset S\partial(K,L_{\Lambda}),
\]
then $K$ is an ellipsoid.}
\begin{figure}[H]
    \centering
    \includegraphics[width=.9\textwidth]{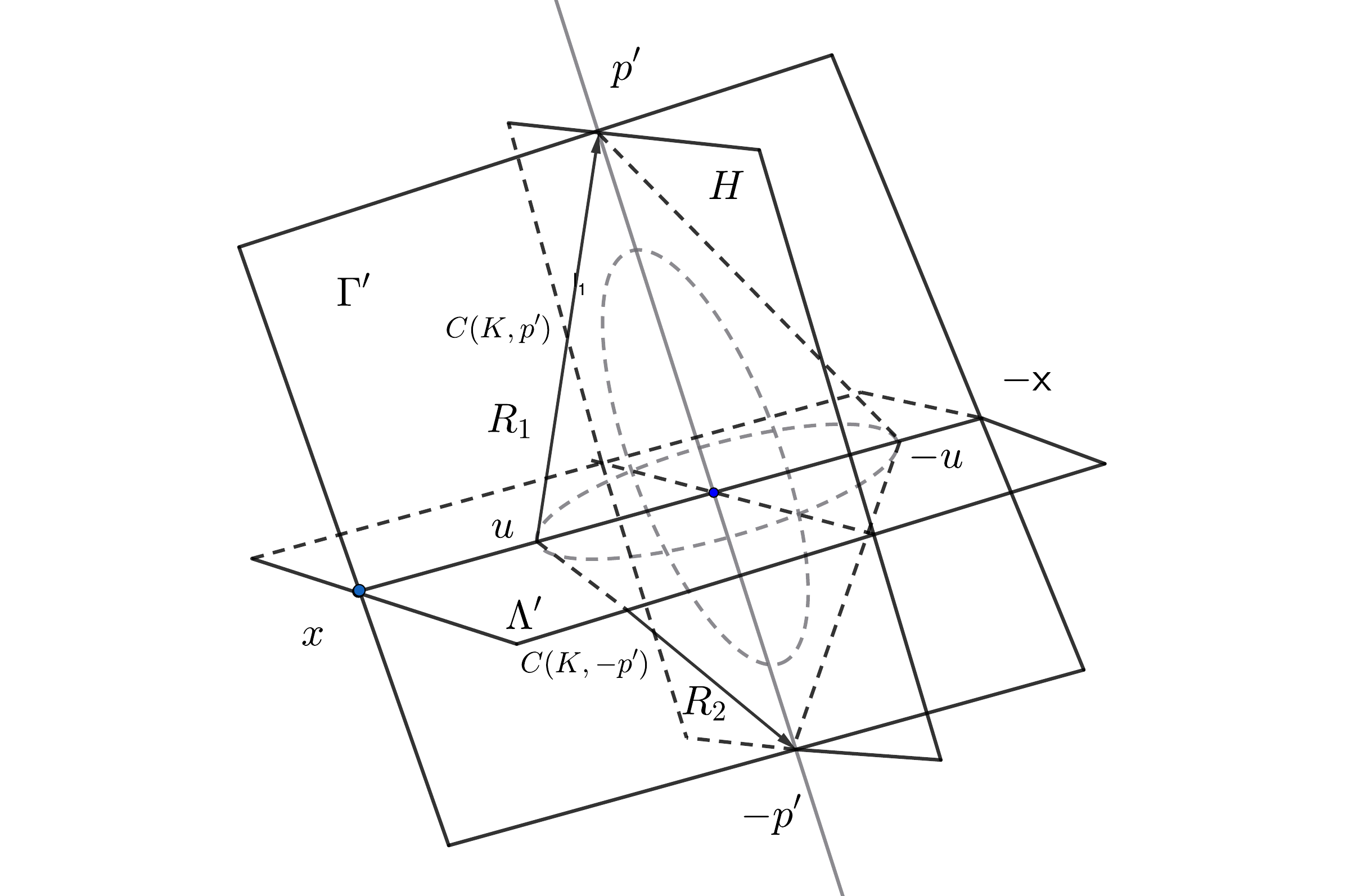}
    \caption{Given the plane $\Lambda$, $O\in \Lambda$, there exists $p\in \bd S$ such that \\$C(K,p)\cap C(K,-p)=\Lambda \cap G$.
}
    \label{aguadona}
\end{figure}
Let $\Lambda$ be a plane, $O\in \Lambda$. Let $x\in \Lambda \cap \bd S$   and let $\{u,-u\}:= L(x,-x)\cap \bd G$, notice that, by Theorem \ref{great}, $S$ and $G$ are centrally symmetric. Let $\Gamma'$ be a plane containing $L(x,-x)$. We denote by $R_1, R_2\subset \Gamma'$ the rays emanating from $u$ which are contained in the supporting lines $L_1$, $L_2$ of $\Gamma' \cap K$ passing through $u$ and let $p':=R_1\cap \bd S$ and $q':=R_2\cap \bd S$ (notice that here we use the condition $G\subset \inte S$). By virtue of the hypothesis it follows that $O\in L(p',q')$, i.e. $q'=-p'$. Furthermore there exists a plane $\Lambda'$ such that $L(u,-u)\subset \Lambda'$ and 
\begin{eqnarray}\label{mamonsita}
C(K,p')\cap C(K,-p')=\Lambda'\cap G.
\end{eqnarray}
Since $L(u,-u)\subset \Lambda'$ and $L(u,-u)=L(x,-x)$ it follows that $L(x,-x)\subset \Lambda'$. Thus, by Lemma \ref{muneca}, $L(p',-p')\subset H$ (see Fig. \ref{aguadona}), where $H$ is a plane such that $O\in H$ and  
\begin{eqnarray}\label{sensual}
C(K,x)\cap C(K,-x)=H\cap G,
\end{eqnarray}
Varying $\Gamma'$, always keeping  the condition $L(x,-x) \subset \Gamma'$, we can find a position of $\Gamma'$, $p'$, which will be denote by $\Gamma$, $p$, respectively, such that the condition \ref{mamonsita} holds, i.e.
\begin{eqnarray}\label{antojo}
C(K,p)\cap C(K,-p)=\Lambda\cap G.
\end{eqnarray}
On the other hand, by Lemma \ref{porton}, the support plane $\Gamma_x$ is parallel to $L(p,-p)$. Hence 
\[
\Lambda \cap S\subset S\partial (S,L(p,-p)).
\]
Hence, by Kakutani's Theorem $K$ is an ellipsoid.
  
\section{Reduction of the general case of Theorems \ref{valor}, \ref{great} and \ref{chiquita} to dimension 3.} 
Suppose that $n\geq 4$ and that the convex body $K\subset \Rn$ has the strong intersection property in dimension $n$ relative to the point $O\in \inte K$, the body $S$, $K\subset \inte S$ and with associated body $G$, $K\subset \inte G$. Let $\Gamma$ be a hyperplane, $O\in \Gamma$. We claim that $K \cap \Gamma$ has the strong intersection property, in dimension $n-1$, relative to $O$ and $S\cap \Gamma$ and with associated body $G\cap \Gamma$. 
 
Let $x\in S \cap \Gamma$. By hypothesis there exists $y\in \bd S$ and a hyperplane $\Pi$, $O\in \Pi$, such that 
\[
S(K,x)\cap S(K,y)= \Pi \cap G.
\]
Notice that, since $L(x,O)\subset \Gamma$, $y\in \Gamma$. Hence $y\in S \cap \Gamma$. It follows that 
\[
S(K\cap \Gamma,x)  \cap S(K\cap \Gamma,y)=(S(K,x) \cap \Gamma)\cap (S(K,y)\cap \Gamma)= (\Pi \cap G) \cap \Gamma,
\]
i.e., $K \cap \Gamma $ has the strong intersection property in dimension $n-1$ relative to $O$ and $S\cap \Gamma$ and with associated body $G\cap \Gamma$. 

\textbf{Reduction of the general case of Theorem \ref{great} to dimension 3.} If we assume that the convex body $K\subset \Rn$, $n\geq 3$ has the strong intersection property in dimension $n$ 
relative to the point $O\in \inte K$, the body $S$, $K\subset \inte S$, and with associated strictly convex body $G$, $K\subset \inte G$, and that Theorem \ref{great} holds in dimension $n-1$, by virtue of the observation at the beginning of this section, it follows that all the sections of convex body $K$ with hyperplanes passing through $O$ are $O$-symmetric. Then $K$ is $O$-symmetric.  

Since it has been proved the case $n=3$ of the Theorem \ref{great}, the proof of the Theorem \ref{great} now is complete.

\textbf{Reduction of the general case of Theorems \ref{valor} and \ref{chiquita} to dimension 3.} If we suppose that the convex body $K\subset \Rn$, $n\geq 3$ has the strong intersection property in dimension $n$ relative to the point $O\in \inte K$, the body $S$, $K\subset \inte S$, and with associated strictly convex body $G$, $K\subset \inte G$, and that Theorems \ref{valor} and \ref{chiquita} holds in dimension $n-1$, by virtue of the observation at the beginning of this section, it follows that all the sections of convex bodies $K, S$ and $G$ with hyperplanes passing through $O$ are homothetic $(n-1)$-ellipsoids. Then, by Theorem 16.12 of \cite{bu} and a theorem of \cite{Rogers} (see \textbf{a) 2.}), $K,S$ and $G$ are $O$-symmetric homothetic $n$-ellipsoids.  

Since it has been proved the case $n=3$ of the Theorems \ref{valor} and \ref{chiquita}, the proof of the Theorem \ref{great} now is complete.

\end{document}